\documentclass[11pt,eqno]{article}
\usepackage{cite}
\usepackage{amssymb}

\usepackage{amsmath}
\allowdisplaybreaks[4]
\usepackage{array}
\usepackage{graphicx}
\usepackage{cases}
\usepackage{color}
\usepackage{multirow}
\usepackage{picinpar}
\usepackage{bm}

\usepackage[all]{xy}%



\definecolor{purple}{rgb}{0.5804,0.0000,0.8275}

\textwidth 6.3in \textheight 8.69in \topmargin -0.5in
\linespread{1.0}
\parskip1.0mm
\oddsidemargin0cm \evensidemargin0cm

\begin{document}
\today\\
\newcommand{\defi}{\stackrel{\Delta}{=}}
\newcommand{\qed}{\hphantom{.}\hfill $\Box$\medbreak}
\newcommand{\A}{{\cal A}}
\newcommand{\B}{{\cal B}}
\newcommand{\U}{{\cal U}}
\newcommand{\G}{{\cal G}}
\newcommand{\cZ}{{\cal Z}}
\newcommand{\proof}{\noindent{\bf Proof \ }}
\newcommand\one{\hbox{1\kern-2.4pt l }}
\newcommand{\Item}{\refstepcounter{Ictr}\item[(\theIctr)]}
\newcommand{\QQ}{\hphantom{MMMMMMM}}

\newtheorem{theorem}{Theorem}[section]
\newtheorem{Proposition}{Proposition}[section]
\newtheorem{condition}{Condition}[section]
\newtheorem{lemma}{Lemma}[section]
\newtheorem{corollary}{Corollary}[section]
\newtheorem{definition}{Definition}[section]
\newtheorem{remark}{Remark}[section]
\newtheorem{assumption}{Assumption}[section]
\newtheorem{example}{Example}[section]
\newenvironment{cproof}
{\begin{proof}
 [Proof.]
 \vspace{-3.2\parsep}}
{\renewcommand{\qed}{\hfill $\Diamond$} \end{proof}}
\newcommand{\erhao}{\fontsize{21pt}{\baselineskip}\selectfont}
\newcommand{\xiaoerhao}{\fontsize{18pt}{\baselineskip}\selectfont}
\newcommand{\sanhao}{\fontsize{15.75pt}{\baselineskip}\selectfont}
\newcommand{\sihao}{\fontsize{14pt}{\baselineskip}\selectfont}
\newcommand{\xiaosihao}{\fontsize{12pt}{\baselineskip}\selectfont}
\newcommand{\wuhao}{\fontsize{10.5pt}{\baselineskip}\selectfont}
\newcommand{\xiaowuhao}{\fontsize{9pt}{\baselineskip}\selectfont}
\newcommand{\liuhao}{\fontsize{7.875pt}{\baselineskip}\selectfont}
\newcommand{\qihao}{\fontsize{5.25pt}{\baselineskip}\selectfont}

\makeatletter
\newcommand{\figcaption}{\def\@captype{figure}\caption}
\newcommand{\tabcaption}{\def\@captype{table}\caption}
\makeatother

\newcounter{Ictr}

\renewcommand{\theequation}{
\arabic{equation}}
\renewcommand{\thefootnote}{\fnsymbol{footnote}}

\def\A{\mathcal{A}}

\def\C{\mathcal{C}}
\def\F{\mathcal{F}}
\def\V{\mathcal{V}}
\def\K{\mathcal{K}}

\def\I{\mathcal{I}}

\def\Y{\mathcal{Y}}

\def\X{\mathcal{X}}

\def\J{\mathcal{J}}

\def\Q{\mathcal{O}}

\def\W{\mathcal{W}}

\def\S{\mathcal{S}}

\def\T{\mathcal{T}}

\def\L{\mathcal{L}}

\def\M{\mathcal{M}}

\def\N{\mathcal{N}}
\def\R{\mathbb{R}}
\def\H{\mathbb{H}}


\begin{center}
\topskip10mm
\LARGE{\bf Low-Rank Matrix Optimization Over Affine Set}
\end{center}

\begin{center}
\renewcommand{\thefootnote}{\fnsymbol{footnote}}Xinrong Li$^{1}$,~Naihua Xiu$^{1}$,~Ziyan Luo$^{1}$
\footnote{
1. Department of Applied Mathematics,
Beijing Jiaotong University, Beijing 100044, P. R. China; X. Li (lixinrong0827@163.com), Z. Luo (starkeynature@hotmail.com, N. Xiu (nhxiu@bjtu.edu.cn).
}\\
{\small

}

\end{center}

\begin{abstract}
The low-rank matrix optimization with affine set (rank-MOA) is to minimize a continuously differentiable function over a low-rank set intersecting with an affine set. Under some suitable assumptions, the intersection rule of the Fr\'{e}chet normal cone to the feasible set of the rank-MOA is established. This allows us to propose the first-order optimality conditions for the rank-MOA in terms of the so-called F-stationary point and the $\alpha$-stationary point. Furthermore, the second-order optimality analysis, including the necessary condition and the sufficient one, is proposed as well. All these results will enrich the theory of low-rank matrix optimization and give potential clues to designing efficient numerical algorithms for seeking low-rank solutions. Meanwhile, we illustrate our proposed optimality analysis for several specific applications of the rank-MOA including the Hankel matrix approximation problem in system identification and the low-rank representation on linear manifold in signal processing.

\end{abstract}
\noindent\textbf{Keywords}: Optimality conditions, Low-rank matrix, Affine set, Normal cones


\vskip12pt

\numberwithin{equation}{section}

\section{Introduction}

In this paper, we consider the following low-rank matrix optimization problem:
\begin{equation}\label{P}
  \begin{aligned}
\min\limits_{X\in\mathbb{R}^{m\times n}}&~~f(X)\\
s.t.&~~ \langle A^i, X \rangle=b_i,~i=1,\ldots,l\\\nonumber
 & ~~\mathrm{rank}(X)\leq r,\nonumber
  \end{aligned}\tag{rank-MOA}
\end{equation}
where $f: \mathbb{R}^{m\times n}\rightarrow \mathbb{R}$ is a continuously differentiable or twice continuously differentiable function, $A^i\in\mathbb{R}^{m\times n}$ and $b_i\in\mathbb{R}$ for each $i=1,\ldots,l$ are the given data. The
inner product is $\langle X,Y\rangle:=\sum_{ij} X_{ij}Y_{ij}$, and  $\rm{rank}(X)$ denotes the rank of $X\in\mathbb{R}^{m\times n}$. $r$ is a positive integer smaller than $n$. For convenience, we denote the low-rank affine set of the \ref{P} as $\mathcal{L}\cap\mathbb{R}(r)$ with
  $$\mathcal{L}:=\{X\in\mathbb{R}^{m\times n}: \langle A^i, X \rangle=b_i, ~i=1,\ldots,l\},~~\mathbb{R}(r):=\{X\in\mathbb{R}^{m\times n}:\mathrm{rank}(X)\leq r\}.$$
The problems with embedded low-rank structures arise in diverse areas such as system identification \cite{liu2009interior-point,Defeng2013hankel}, control\cite{SeogStructurally}, signal processing\cite{wright2010sparse,xue2018robust}, collaborative filtering\cite{gillis2011low-rank}, statistics\cite{wainwright2014structured,she2017robust}, finance\cite{42005Rank,QiA2006}, machine learning\cite{kobayashi2014low-rank,xing2002distance}, among others. Due to the low-rank constraint, however, low-rank matrix optimizations of the form \ref{P} are highly nonconvex and computationally NP-hard in general\cite{fazel2002matrix}. In order to deal with the rank constraint and to find a low-rank solution, this problem has attracted a lot of research attention over the last few years.

Minimizing or penalizing the convex (nonconvex) relaxation of the rank function has proven to be an effective method for generally keeping its rank small, and a vast amount of recent work has focused on this technique\cite{fazel2002matrix,  fazel2003, Fornasier2011Low, Mohan2012Iterative,Lai2013Improved}; however, many problems require finding a matrix whose rank is constrained to be a particular value. Handling rank constraint is known to be difficult since the rank is discontinuous and nonconvex. This has motivated several researchers to find equivalent representations for low-rank constraint.  We refer the interested reader to the papers \cite{burer2005local,JournLow, Gao2010STRUCTURED,delgado2016novel,zhou2018fast} and the references therein.  However, little research has been done in optimality theory for the original low-rank matrix optimization merely with rank constraint, let alone the \ref{P}.

The analysis of optimality conditions is one of the most important topics to solve the original low-rank matrix optimization. This is often done through the tangent and normal cones to the constraint set of the problem. During the past few years, various notions of tangent and normal cones have been introduced to deal with the low-rank constraint directly. For the matrix optimization with a single low-rank constraint, the Bouligand tangent cone to the low-rank set has been derived in \cite{schneider2015convergence}, the proximal and Mordukhovich normal cones to the low-rank set have been given in \cite{luke2013prox-regularity}, and the Clarke tangent cone and its corresponding normal cone to the low-rank set have been presented in \cite{Li2018Social}. Further study on the low-rank matrix optimization with some additional simple constraints have also been discussed, including the Bouligand tangent cone and its corresponding normal cone to the intersection of the low-rank set and a unit ball in \cite{Cason2013Iterative}, the Mordukhovich normal cone to the intersection of the low rank set and the cone of positive semidefinite matrices in \cite{tam2017regularity}, and the Fr\'{e}chet normal cones to the intersection of the low-rank set and some given spectral sets in \cite{Li2019Social}. All the involved constraint sets are of symmetry in the spectral sense. We generalize this line of work by focusing on the low-rank matrix optimization problem whose feasible set is the intersection of the low-rank set and an affine set.

When some affine constraints are involved in the low-rank matrix optimization problem, in order to ensure that  optimality conditions are established, a constraint qualification (CQ) is needed. It is well-known that a CQ is a condition imposed on constraint functions so that Karush-Kuhn-Tucker points(called stationary points in Section 3) hold at a local minimizer. There exist very weak constraint qualifications such as Guignard's and Abadie's constraint qualifications \cite{guignard1969generalized,Abadie1967} but they are not easy to verify since it involves computing the tangent or normal cone of the constraint region. The challenge is to find verifiable constraint qualifications that are applicable to the \ref{P}.

In this paper,  we tackle the \ref{P} directly and study optimality conditions tailored for the \ref{P}. To achieve this goal, we discuss the intersection rule of the Fr{\'e}chet normal cone by the linear independence assumptions to ensure that a local minimizer satisfies the stationary points. We also define two types of stationary points for the \ref{P}, which are fundamental for the development of algorithm design. Then, we obtain the first-order and the second-order optimality conditions associated with the stationary points and global/local minimizers for the \ref{P} under suitable conditions.  Moreover, we illustrate how to apply our results to the problems of Hankel matrix approximation and low-rank representation on linear manifold.

This paper is organized as follows. In Section 2, we review some related concepts and properties for normal cones and  establish the intersection rule of Fr\'{e}chet normal cone for the feasible set. In Section 3, we introduce two kinds of stationary points and investigate the first-order and second-order optimality conditions for the \ref{P}. In Section 4, we discuss two important applications of rank-MOA to illustrate our proposed optimality analysis. Conclusions are made in Section 5.

Our notation is standard. Let $\mathbb{R}^{m\times n}$ be the Euclidean space of the real $m \times n$ matrices equipped with the inner product $\langle X,Y\rangle=\sum_{ij} X_{ij}Y_{ij}$ and the induced Frobenius norm $\|X\|_F$. For any $X\in\mathbb{R}^{m\times n}$, $\|X\|_2$ denotes the spectral norm, i.e., the largest singular value of $X$.
We denote by $X_{ij}$ the $(i,j)$-th entry of $X$. We use $x_{j}$ to represent the $j$th column of $X$, $j=1,\ldots, n$. Let $J\subseteq\{1,\ldots, n\}$ be an index set. $|J|$ is the cardinality of $J$. 
We use $X_J$ to denote the sub-matrix of $X$ that contains all columns indexed by $J$.
$\mathcal{O}^p$ is the set of all $p\times p$ orthogonal matrices, i.e., $\mathcal{O}^p=\{A\in\mathbb{R}^{p\times p}~|~A ^\top  A =I_p\},$  where $I_p$  denotes the $p$ order identity matrix. 
Let $\mathbb{R}^n$ be the Euclidean space.
For a vector $x\in\mathbb{R}^n$, let $\mathrm{Diag}(x)$ be an $n\times n$ diagonal matrix with diagonal entries $x_i$.
Denote $x_J = (x_i)_J\in\mathbb{R}^{|J|}$ as the subvector of $x$ corresponding to the indices in $J$.
$\|\cdot\|_0$ is the $l_0$ norm counting the number of nonzero entries of $x$.
For a nonempty and closed set $\Omega\subset\mathbb{R}^{m\times n}$, the projection of a matrix $X\in\mathbb{R}^{m\times n}$ onto $\Omega$ is defined as $\Pi_\Omega(X):=\rm{argmin}_{Z\in \Omega}\|Z-X\|_F$.

\section{Normal Cone Intersection Rule}
\label{sec:rule}
Normal cones to feasible sets play an important role in the optimality analysis of constrained optimization. As the feasible set of the \ref{P} is an intersection of the affine set ${\mathcal{L}}$ and the low-rank matrix set $\mathbb{R}(r)$, we will discuss the intersection rule of the normal cone to such an intersection set in this section. Before proceeding, several related concepts and properties related to normal cones are reviewed, mainly followed from the classical monograph \cite{Rockafellar2013Variational}.

Recall that a set $\mathcal{K}$ is called a cone, if $\gamma\mathcal{K}\subseteq \mathcal{K}$ holds for all $\gamma\geq 0$. The polar of the cone $\mathcal{K}$ is, denoted as $\mathcal{K}^\circ$, is defined by $\mathcal{K}^\circ=\{Y| \langle Y, X\rangle\leq 0,~ \forall X\in\mathcal{K}\}$.
If $\mathcal{K}_1$ and $\mathcal{K}_2$ are nonempty cones in $\mathbb{R}^{m\times n}$, we have
\begin{equation}\label{01}
(\mathcal{K}_1\cup \mathcal{K}_2)^\circ=(\mathcal{K}_1+\mathcal{K}_2)^\circ=\mathcal{K}_1^\circ\cap \mathcal{K}_2^\circ.
\end{equation}
Furthermore, if $\mathcal{K}_1$ and $\mathcal{K}_2$ are closed convex cones, then
\begin{equation}\label{02}
(\mathcal{K}_1\cap \mathcal{K}_2)^\circ=\mathcal{K}_1^\circ+ \mathcal{K}_2^\circ.
\end{equation}
For any given nonempty, closed set $\Omega \subseteq \mathbb{R}^{m\times n}$, and any $X\in \Omega$, the Bouligand tangent cone and its polar (also called the Fr\'{e}chet normal cone) to $\Omega$ at $X$, termed as $\mathrm{T}_\Omega^B(X)$ and $\mathrm{N}_\Omega^F(X)$, are defined by
\begin{eqnarray*}
\mathrm{T}_\Omega^B(X):&=&\left \{\Xi\in\mathbb{R}^{m\times n}: \begin{array}{l}\exists\{X^k\}\subseteq\Omega~\text{with}~X^k\rightarrow X;~\exists\{t_k\}~\text{with}\\  t_k\rightarrow0,~\mathrm{s.t.}~t_k^{-1}(X^k-X)\rightarrow\Xi,~\forall k\in\mathbb{N} \end{array}\right \},\\
\mathrm{N}_\Omega^F(X):&=&[\mathrm{T}_\Omega^B(X)]^\circ.
\end{eqnarray*}
Additionally, the Mordukhovich normal cone to $\Omega$ at $X$, termed as $\mathrm{N}_\Omega^M(X)$, is defined by
$$ \mathrm{N}_\Omega^M(X):=\limsup_{X'\xrightarrow{\Omega}X}\mathrm{N}_{\Omega}^F(X'),$$
where $X'\xrightarrow{\Omega}X$ means that $X'\in \Omega$ and $X'\rightarrow X$. It is seen that $\mathrm{N}_{\Omega}^F(X)\subseteq \mathrm{N}_\Omega^M(X)$.
\begin{definition}(See \cite[Definition 6.4]{Rockafellar2013Variational})
The set $\Omega$ being locally closed at $X\in\Omega$ and satisfying $\mathrm{N}_{\Omega}^F(X)=\mathrm{N}_\Omega^M(X)$ is called regular at $X$ in the sense of Clarke.
\end{definition}

 Particularly, if $\Omega$ is a closed convex set or a smooth submanifold, then $\Omega$ is regular. In this case, for any given $X\in \Omega$, we simply write
\begin{eqnarray*}
\mathrm{T}_\Omega(X):=\mathrm{T}_\Omega^B(X),~\text{and}~\mathrm{N}_\Omega(X):=\mathrm{N}_{\Omega}^F(X)=\mathrm{N}_\Omega^M(X).
\end{eqnarray*}

Next we concern with the calculations of tangent cones and normal cones to the union  and intersection of finitely many nonempty closed sets $\Omega_1,\ldots, \Omega_k$. For $X\in \bigcup^k_{i=1}\Omega_i$, we have
 \begin{equation}\label{03}
\mathrm{T}^B_{\bigcup^k_{i=1}\Omega_i}(X)=\bigcup^k_{i=1}\mathrm{T}^B_{\Omega_i}(X).
 \end{equation}
Let $X\in \Omega_1\cap\Omega_2$. It holds that
\begin{equation}\label{04}
\mathrm{T}^B_{\Omega_1\cap\Omega_2}(X)\subseteq \mathrm{T}^B_{\Omega_1}(X)\cap \mathrm{T}^B_{\Omega_2}(X),~~~~\mathrm{N}^F_{\Omega_1\cap\Omega_2}(X)\supseteq \mathrm{N}^F_{\Omega_1}(X)+\mathrm{N}^F_{\Omega_2}(X).
\end{equation}
Under the basic qualification condition $\mathrm{N}^M_{\Omega_1}(X)\cap(-\mathrm{N}^M_{\Omega_2}(X))=\{0\}$, we also have
\begin{equation}\label{05}
\mathrm{N}^M_{\Omega_1\cap\Omega_2}(X)\subseteq \mathrm{N}^M_{\Omega_1}(X)+\mathrm{N}^M_{\Omega_2}(X).
\end{equation}
If in addition $\Omega_1$ and $\Omega_2$ are regular at $X$, then $\Omega_1\cap \Omega_2$ is regular at $X$ and
\begin{equation}\label{06}
 \mathrm{T}^B_{\Omega_1\cap\Omega_2}(X)=\mathrm{T}^B_{\Omega_1}(X)\cap \mathrm{T}^B_{\Omega_2}(X),~~~~\mathrm{N}^F_{\Omega_1\cap\Omega_2}(X)=\mathrm{N}^F_{\Omega_1}(X)+\mathrm{N}^F_{\Omega_2}(X).
\end{equation}

Recall that for any matrix $X \in \mathbb{R}^{m\times n}$ (without loss of generality, assume $m\geq n$) of rank $s:=\mathrm{rank}(X)$, its singular value decomposition (SVD) can be expressed as
\begin{equation}\label{SVD}
X =U \Sigma(X ){V} ^\top = [U_{\Gamma}~ U_{\Gamma_m^\perp} ]
     \left[
      \begin{array}{cc}
       {\Sigma}_{\Gamma\Gamma }(X ) & 0 \\
       0 & 0 \\
     \end{array}
   \right][ V_{\Gamma } ~ {V}_{\Gamma_n^\perp} ] ^\top
 \end{equation}
where $U\in\mathcal{O}^m$ and $V\in\mathcal{O}^n$, $\Gamma\subseteq \{1, \ldots, n\}$ is the index set for nonzero singular values with $|\Gamma|=s$, $\Sigma_{\Gamma\Gamma}(X)\in \mathbb{R}^{s\times s}$ is the submatrix of the diagonal matrix $\Sigma(X)$ indexed by $\Gamma$, $\Gamma_m^\bot = \{1,\ldots, m\}\setminus \Gamma$, and $\Gamma_n^\bot = \{1,\ldots, n\}\setminus \Gamma$. Denote $\mathbb{R}^{m\times n}$ by $\mathbb{R}_{s}:=\{X\in\mathbb{R}^{m\times n}: \mathrm{rank}(X)=s\}$.
It is well known that $\mathbb{R}_{s}$ is a smooth submanifold (see, \cite{Helmke1995Critical}), and hence $\mathbb{R}_{s}$ is regular. The corresponding tangent and normal cones (spaces) to $\mathbb{R}_{s}$ at $X$ have the following explicit formulae,
\begin{eqnarray*}
\mathrm{T}_{\mathbb{R}_{s}}(X )&=&\left\{H\in\mathbb{R}^{m\times n}: U_{\Gamma_m^\perp}^\top HV_{\Gamma_n^\perp}=0 \right\},\\
\mathrm{N}_{\mathbb{R}_{s}}(X )&=&\left\{{U}_{\Gamma_m^\perp}D{V}_{\Gamma_n^\perp}^\top\in \mathbb{R}^{m\times n}: D\in\mathbb{R}^{(m-s)\times (n-s)}\right\}.
\end{eqnarray*}

With the aid of the above expressions of tangent and normal cones to the fixed-rank matrix set $\mathbb{R}_s$, the explicit formulae for the Bouligand tangent cone, the Fr\'{e}chet normal cone, and the Mordukhovich normal cone to the low-rank matrix set $\mathbb{R}(r)$, have been characterized in \cite[Theorem 3.2]{schneider2015convergence} and \cite[Proposition 3.6]{luke2013prox-regularity}, respectively, as the following lemma stated.

\begin{lemma}\label{NBF}
For any given $X \in\mathbb{R}(r)$ of rank $s$, we have
 \begin{eqnarray*}
\mathrm{T}^B_{\mathbb{R}(r)}(X )&=&\mathrm{T}_{\mathbb{R}_{s}}(X )+\{\Xi\in \mathrm{N}_{\mathbb{R}_{s}}(X ): \mathrm{rank}(\Xi)\leq r-s\},\\
\mathrm{N}^F_{\mathbb{R}(r)}(X )&=&\begin{cases}
\mathrm{N}_{\mathbb{R}_{s}}(X ),& \text{if}~s=r,\\
\{0\},&\text{if}~s<r.
\end{cases}\\
\mathrm{N}^M_{\mathbb{R}(r)}(X)&=&\{W\in \mathrm{N}_{\mathbb{R}_{s}}(X): \text{rank}(W)\leq n-r\}.
\end{eqnarray*}
\end{lemma}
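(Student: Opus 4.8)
The plan is to reduce every assertion to the block decomposition of a direction $H\in\mathbb{R}^{m\times n}$ relative to the SVD factors $U=[U_{\Gamma}~U_{\Gamma_m^\perp}]$, $V=[V_{\Gamma}~V_{\Gamma_n^\perp}]$ of $X$ furnished by \eqref{SVD}, where $|\Gamma|=s$. Writing $H_{22}:=U_{\Gamma_m^\perp}^\top H V_{\Gamma_n^\perp}$, the explicit formulae for $\mathrm{T}_{\mathbb{R}_{s}}(X)$ and $\mathrm{N}_{\mathbb{R}_{s}}(X)$ give the orthogonal splitting $H=(H-U_{\Gamma_m^\perp}H_{22}V_{\Gamma_n^\perp}^\top)+U_{\Gamma_m^\perp}H_{22}V_{\Gamma_n^\perp}^\top$ with the first summand in $\mathrm{T}_{\mathbb{R}_{s}}(X)$ and the second in $\mathrm{N}_{\mathbb{R}_{s}}(X)$. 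Thus the whole lemma amounts to controlling $\mathrm{rank}(H_{22})$, and I would prove each formula by establishing the two inclusions separately.

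For the tangent cone I would argue both inclusions directly from the sequential definition. For ``$\supseteq$'', given $T\in\mathrm{T}_{\mathbb{R}_{s}}(X)$ and $\Xi\in\mathrm{N}_{\mathbb{R}_{s}}(X)$ with $\mathrm{rank}(\Xi)\le r-s$, I would take a smooth curve $\gamma(t)\subseteq\mathbb{R}_{s}$ with $\gamma(0)=X$, $\gamma'(0)=T$ (available since $\mathbb{R}_{s}$ is a manifold with tangent space $\mathrm{T}_{\mathbb{R}_{s}}(X)$) and set $X(t)=\gamma(t)+t\Xi$; then $\mathrm{rank}(X(t))\le s+\mathrm{rank}(\Xi)\le r$, so $X(t)\in\mathbb{R}(r)$, while $t^{-1}(X(t)-X)\to T+\Xi$. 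The harder ``$\subseteq$'' direction is where I expect the main analytic work: for $H=\lim t_k^{-1}(X_k-X)$ with $X_k\in\mathbb{R}(r)$, $X_k\to X$, $t_k\to0^+$, I would pass to the block form $U^\top X_k V=\left(\begin{smallmatrix} A_k & B_k \\ C_k & D_k \end{smallmatrix}\right)$, where $A_k\to\Sigma_{\Gamma\Gamma}(X)$ is invertible for large $k$ and $B_k,C_k,D_k=O(t_k)$. The Schur-complement identity then yields $\mathrm{rank}(X_k)=s+\mathrm{rank}(D_k-C_kA_k^{-1}B_k)$, so $\mathrm{rank}(D_k-C_kA_k^{-1}B_k)\le r-s$; since $C_kA_k^{-1}B_k=O(t_k^2)$, we get $t_k^{-1}(D_k-C_kA_k^{-1}B_k)\to H_{22}$, and lower semicontinuity of rank forces $\mathrm{rank}(H_{22})\le r-s$, which is exactly the required splitting.

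The Fr\'{e}chet normal cone follows by polarity, $\mathrm{N}^F_{\mathbb{R}(r)}(X)=[\mathrm{T}^B_{\mathbb{R}(r)}(X)]^\circ$. When $s=r$ the tangent cone collapses to the subspace $\mathrm{T}_{\mathbb{R}_{s}}(X)$, whose polar is its orthogonal complement $\mathrm{N}_{\mathbb{R}_{s}}(X)$. When $s<r$, any $Y$ in the polar must annihilate $\mathrm{T}_{\mathbb{R}_{s}}(X)$, hence $Y\in\mathrm{N}_{\mathbb{R}_{s}}(X)$; but with $r-s\ge1$ the admissible directions $\{\Xi\in\mathrm{N}_{\mathbb{R}_{s}}(X):\mathrm{rank}(\Xi)\le r-s\}$ already contain every rank-one matrix of $\mathrm{N}_{\mathbb{R}_{s}}(X)$ together with its negative, so $\langle Y,\Xi\rangle\le0$ on this symmetric generating set forces $Y=0$, giving $\mathrm{N}^F_{\mathbb{R}(r)}(X)=\{0\}$.

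For the Mordukhovich normal cone I would compute the outer limit $\limsup_{X'\xrightarrow{\mathbb{R}(r)}X}\mathrm{N}^F_{\mathbb{R}(r)}(X')$ directly, noting that only base points $X'$ of rank exactly $r$ contribute (elsewhere the Fr\'{e}chet cone is $\{0\}$). For ``$\subseteq$'', given $W=\lim W_k$ with $W_k\in\mathrm{N}_{\mathbb{R}_{r}}(X'_k)$ and $X'_k\to X$ of rank $r$, each $W_k=(U'_k)_{\Gamma_m^\perp}D_k((V'_k)_{\Gamma_n^\perp})^\top$ has rank $\le n-r$; passing to a subsequence along which the orthonormal frames converge and using $(U'_k)_{\Gamma_m^\perp}{}^\top X'_k=0$ and $X'_k(V'_k)_{\Gamma_n^\perp}=0$ in the limit, the column and row spaces of $W$ land in $\mathrm{range}(U_{\Gamma_m^\perp})$ and $\mathrm{range}(V_{\Gamma_n^\perp})$, so $W\in\mathrm{N}_{\mathbb{R}_{s}}(X)$, while lower semicontinuity of rank keeps $\mathrm{rank}(W)\le n-r$. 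For ``$\supseteq$'', given $W=U_{\Gamma_m^\perp}DV_{\Gamma_n^\perp}^\top$ with $\mathrm{rank}(D)\le n-r$, I would enrich $X$ by $r-s$ rank-one terms, $X'_k=X+\tfrac1k\sum_{j=1}^{r-s}\hat u_j\hat v_j^\top$, choosing $\hat u_j\in\mathrm{range}(U_{\Gamma_m^\perp})\cap\mathrm{range}(W)^\perp$ and $\hat v_j\in\mathrm{range}(V_{\Gamma_n^\perp})\cap\mathrm{range}(W^\top)^\perp$; then $X'_k\to X$ has rank exactly $r$ and $W\in\mathrm{N}_{\mathbb{R}_{r}}(X'_k)=\mathrm{N}^F_{\mathbb{R}(r)}(X'_k)$. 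The main obstacle here is this perturbation step: one must guarantee enough mutually orthogonal enrichment directions exist, and the dimension count $\dim\big(\mathrm{range}(U_{\Gamma_m^\perp})\cap\mathrm{range}(W)^\perp\big)\ge(m-s)-(n-r)\ge r-s$ holds precisely because of the standing assumption $m\ge n$ (the companion right-hand count $(n-s)-(n-r)=r-s$ being automatic), which is exactly what makes the construction feasible.
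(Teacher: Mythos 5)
Your argument is correct in all three parts, but be aware that the paper itself offers no proof of this lemma to compare against: the three formulae are imported verbatim from the literature, the tangent-cone expression from Theorem 3.2 of \cite{schneider2015convergence} and the two normal-cone expressions from Proposition 3.6 of \cite{luke2013prox-regularity}. What you have written is a faithful, self-contained reconstruction of those cited proofs. The block decomposition $U^\top X_k V$ with the Schur-complement identity $\mathrm{rank}(X_k)=s+\mathrm{rank}(D_k-C_kA_k^{-1}B_k)$, the estimate $C_kA_k^{-1}B_k=O(t_k^2)$, and lower semicontinuity of rank is exactly the standard route to $\mathrm{T}^B_{\mathbb{R}(r)}(X)$; polarity together with the observation that for $r-s\geq 1$ the admissible normal directions contain a symmetric spanning subset of $\mathrm{N}_{\mathbb{R}_s}(X)$ correctly forces $\mathrm{N}^F_{\mathbb{R}(r)}(X)=\{0\}$; and the outer-limit computation with the rank-$(r-s)$ enrichment $X'_k=X+\tfrac1k\sum_j\hat u_j\hat v_j^\top$ is the standard proof of the Mordukhovich formula, with your dimension count correctly isolating where the standing convention $m\geq n$ (and the resulting bound $n-r$ rather than $m-r$) enters. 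Two minor simplifications: in the inner-limit (``$\subseteq$'') step for $\mathrm{N}^M$ you need not extract convergent orthonormal frames at all, since passing to the limit in the identities $(X'_k)^\top W_k=0$ and $X'_k W_k^\top=0$ already places $W$ in $\mathrm{N}_{\mathbb{R}_s}(X)$; and in the ``$\supseteq$'' step for the tangent cone you could replace the smooth curve $\gamma$ by the explicit perturbation used in \cite{schneider2015convergence}, though your manifold-curve argument is equally valid.
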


We are now in a position to study the intersection rule of Fr{\'e}chet normal cone to the feasible set of the \ref{P}. For $X\in \mathcal{L}\cap \mathbb{R}(r)$ with its SVD as in \eqref{SVD}, we denote $\mathcal{J}:=\{J\subseteq\{1,\ldots, n\}:|J|=r, \Gamma\subseteq J\}$ and introduce the following subset of the low-rank set with respect to $X$ together with the matrices $U$ and $V$ in \eqref{SVD}:
\begin{equation}\label{SX}
\mathbb{R}_{(X,U,V)}(r):=\bigcup_{J\in\mathcal{J}}\mathbb{R}_{(X,U,V)}(J)\subseteq \mathbb{R}(r),
\end{equation}
 where
\begin{equation}\label{SJX}
\mathbb{R}_{(X,U,V)}(J):=\left\{U_{J} A V_{J}^\top: A\in \mathbb{R}^{r\times r}\right\}
\end{equation}
is a subspace associated with $(X,U,V)$. For simplicity, we use $\mathbb{R}_X(r)$ and $\mathbb{R}_X(J)$ to briefly denote $\mathbb{R}_{(X,U,V)}(r)$ and $\mathbb{R}_{(X,U,V)}(J)$, respectively. In particular, if $\text{rank}(X)=r$, we have
$$\mathbb{R}_X(r)=\mathbb{R}_X(\Gamma) = \left\{U_{\Gamma} A V_{\Gamma}^\top: A\in \mathbb{R}^{r\times r}\right\}.$$

\begin{lemma}\label{NNN}
Let $X \in \mathbb{R}(r)$ be a rank $s$ matrix with its SVD as in \eqref{SVD}, and $\mathbb{R}_X(J)$ and $\mathbb{R}_X(r)$ be defined as in \eqref{SJX} and \eqref{SX}. The following statements hold.
\begin{itemize}
\item[(i)] For any $J\in \mathcal{J}$,
\begin{equation}\label{TNspace}
\mathrm{N}_{\mathbb{R}_X(J)}(X )
=\{W\in\mathbb{R}^{m\times n}:U_J^\top W V_J=0\}.\end{equation}
\item[(ii)] Let $[m-n] =\{1,\ldots,m\}\setminus \{1,\ldots, n\}$. Then
\begin{equation}\label{TNspace1}\mathrm{N}^F_{\mathbb{R}_X(r)}(X)=\left\{ \begin{array}{ll}
           \left\{
  W\in\mathbb{R}^{m\times n}:U_\Gamma^\top W V_\Gamma=0 \right\}, & \text{if}~s=r\\
         \left\{ \begin{array}{l}
         U_{\Gamma_n^\bot}D {V}_{\Gamma_n^\bot}^\top+\\U_{[m-n]}HV^\top
         \end{array}:\begin{array}{l}
                                                                               \text{diag}(D)=0\\
                                                                               D\in\mathbb{R}^{(n-s)\times (n-s)} \\
                                                                                 H\in\mathbb{R}^{(m-n)\times n}
                                                                             \end{array}
         \right\}, &\text{if}~s=r-1,\\
         \left\{U_{[m-n]}HV^\top: H\in\mathbb{R}^{(m-n)\times n}\right\},&\text{if} ~s\leq r-2.\\
             \end{array}
\right.
\end{equation}
\end{itemize}
\end{lemma}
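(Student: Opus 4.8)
The plan is to reduce both parts to linear algebra in the orthonormal coordinate frame of the SVD \eqref{SVD} and then to run a short combinatorial case analysis.

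For (i), I would first note that $\mathbb{R}_X(J)=\{U_JAV_J^\top:A\in\mathbb{R}^{r\times r}\}$ is the image of the linear map $A\mapsto U_JAV_J^\top$ and hence a linear subspace, with $X\in\mathbb{R}_X(J)$ since $\Gamma\subseteq J$ lets us write $X=U_\Gamma\Sigma_{\Gamma\Gamma}(X)V_\Gamma^\top=U_JAV_J^\top$ for a suitably padded $A$. For a linear subspace the Bouligand tangent cone at any of its points is the subspace itself and the Fr\'echet normal cone is its orthogonal complement, so $\mathrm{N}_{\mathbb{R}_X(J)}(X)=\mathbb{R}_X(J)^\perp$. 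The adjoint identity $\langle W,U_JAV_J^\top\rangle=\langle U_J^\top WV_J,A\rangle$, valid for all $A$, then shows that $W\perp\mathbb{R}_X(J)$ exactly when $U_J^\top WV_J=0$, which is \eqref{TNspace}.

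For (ii), I would assemble the normal cone of the union from the pieces in (i). Because each $\mathbb{R}_X(J)$ is a subspace, \eqref{03} gives $\mathrm{T}^B_{\mathbb{R}_X(r)}(X)=\bigcup_{J\in\mathcal{J}}\mathbb{R}_X(J)$; taking polars and invoking \eqref{01} together with part (i) yields
$$\mathrm{N}^F_{\mathbb{R}_X(r)}(X)=\Big[\bigcup_{J\in\mathcal{J}}\mathbb{R}_X(J)\Big]^{\circ}=\bigcap_{J\in\mathcal{J}}\mathbb{R}_X(J)^\perp=\bigcap_{J\in\mathcal{J}}\{W:U_J^\top WV_J=0\}.$$
The key simplification is to change coordinates to $\widetilde{W}:=U^\top WV$, so that $W=U\widetilde{W}V^\top$ and, using $U^\top U=I_m$ and $V^\top V=I_n$, the constraint $U_J^\top WV_J=0$ is equivalent to the vanishing of the principal submatrix $\widetilde{W}_{J,J}$. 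Hence the normal cone equals $\{U\widetilde{W}V^\top:\widetilde{W}_{J,J}=0,\ \forall J\in\mathcal{J}\}$ and everything reduces to deciding which entries of $\widetilde{W}$ are pinned to zero.

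The hard part is then purely combinatorial. Since every $J\in\mathcal{J}$ satisfies $J\subseteq\{1,\ldots,n\}$, no constraint ever touches the rows indexed by $[m-n]$, which therefore stay free and contribute the term $U_{[m-n]}HV^\top$. For indices $p,q\in\{1,\ldots,n\}$, the entry $\widetilde{W}_{pq}$ is forced to vanish iff some $J\in\mathcal{J}$ contains both $p$ and $q$, i.e. iff $|\Gamma\cup\{p,q\}|\le r$, and it is free otherwise. The concluding step is the case split on $s=\mathrm{rank}(X)\le r$, sorting pairs $(p,q)$ by how many of $p,q$ lie outside $\Gamma$ and whether $p=q$: when $s=r$ only pairs inside $\Gamma$ satisfy $|\Gamma\cup\{p,q\}|\le r$, giving $U_\Gamma^\top WV_\Gamma=0$; when $s=r-1$ every pair meeting $\Gamma$ and every diagonal pair (including $p=q\notin\Gamma$, where $|\Gamma\cup\{p\}|=s+1=r$) is forced, while a genuinely off-diagonal pair with $p,q\notin\Gamma$ has $|\Gamma\cup\{p,q\}|=s+2>r$ and hence survives, leaving precisely the strictly off-diagonal entries of the $\Gamma_n^\perp\times\Gamma_n^\perp$ block, i.e. the term $U_{\Gamma_n^\perp}DV_{\Gamma_n^\perp}^\top$ with $\mathrm{diag}(D)=0$; and when $s\le r-2$ every pair in $\{1,\ldots,n\}$ is forced, leaving only the $[m-n]$ rows. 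I expect the $s=r-1$ case to be the main obstacle, since that is exactly where the diagonal constraint $\mathrm{diag}(D)=0$ is produced and where an index miscount would corrupt the stated formula.
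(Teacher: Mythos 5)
Your proposal is correct and follows essentially the same route as the paper: part (i) via the orthogonal complement of the subspace $\mathbb{R}_X(J)$, and part (ii) by writing the polar of the union as the intersection $\bigcap_{J\in\mathcal{J}}\{W:U_J^\top WV_J=0\}$ and then splitting into the cases $s=r$, $s=r-1$, $s\leq r-2$. Your coordinate change $\widetilde{W}=U^\top WV$ and the uniform criterion $|\Gamma\cup\{p,q\}|\leq r$ are just a cleaner packaging of the paper's entrywise case analysis, and all three cases come out the same.
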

\begin{proof} 
The first part follows readily from the definition of the subspace $\mathbb{R}_X(J)$. Note that
 \begin{eqnarray}\label{J0}
\mathrm{N}^F_{\mathbb{R}_X(r)}(X )&=&\left(\mathrm{T}^B_{\mathbb{R}_X(r)}(X )\right)^\circ
=\left(\mathrm{T}^B_{\bigcup \limits_{J\in\mathcal{J} }\mathbb{R}_X(J)}(X )\right)^\circ=\left(\bigcup \limits_{J\in\mathcal{J} }\mathrm{T}_{\mathbb{R}_X(J)}(X )\right)^\circ\nonumber\\
&=&\bigcap \limits_{J\in\mathcal{J} }\mathrm{N}_{\mathbb{R}_X(J)}(X )\nonumber\\
&=&\bigcap \limits_{J\in\mathcal{J} }\{W\in\mathbb{R}^{m\times n}: U_J^\top WV_J=0\}\nonumber\\
&=& \{W\in\mathbb{R}^{m\times n}: U_J^\top WV_J=0,~\forall J\in\mathcal{J}\}.
\end{eqnarray}
Specifically, if $s=r$, then $\mathcal{J}=\{\Gamma\}$, and hence $\mathrm{N}_{\mathbb{R}_X(r)}(X)=\{W\in\mathbb{R}^{m\times n}:U_\Gamma^\top WV_\Gamma=0 \}$. If $s=r-1$, then $\mathcal{J}=\{\Gamma\cup \{i\}:i\in\Gamma_n^\bot\}$. Thus
\begin{eqnarray*}
\mathrm{N}_{\mathbb{R}_X(r)}(X)&=&\bigcap \limits_{i\in\Gamma_n^\bot}\left \{W\in\mathbb{R}^{m\times n}: \begin{array}{l}
U_\Gamma^\top WV_\Gamma=0,~u_{i}^\top Wv_{i}=0,\\  u_{i}^\top WV_{\Gamma}=0,~U_{\Gamma}^\top Wv_{i}=0\end{array}\right \}\\
&=&  \left \{W\in\mathbb{R}^{m\times n}: \begin{array}{l}
U_\Gamma^\top WV_\Gamma=0,~\text{diag}(U_{\Gamma_n^\bot}^\top WV_{\Gamma_n^\bot})=0\\  U_{\Gamma_n^\bot}^\top WV_{\Gamma}=0,~U_{\Gamma}^\top WV_{\Gamma_n^\bot}=0\end{array}\right \}\\
&=& \left\{U_{\Gamma_n^\bot}D {V}_{\Gamma_n^\bot}^\top+U_{[m-n]}HV^\top:\begin{array}{l}
                                                                               \text{diag}(D)=0\\
                                                                               D\in\mathbb{R}^{(n-s)\times (n-s)} \\
                                                                                 H\in\mathbb{R}^{(m-n)\times n}
                                                                             \end{array}\right\}.
\end{eqnarray*}
If $s\leq r-2$. Consider any given $W\in\mathrm{N}_{\mathbb{R}_X(r)}(X)$. For any $i,j\in\{1,2,\ldots,n\}$, there exists an index set $J\subseteq\mathcal{J}$ such that $\{i,j\}\subset J$. The fact $U_J^\top WV_J=0$ in \eqref{J0} implies that $u_i^TWv_j=0$.
From the arbitrariness of $i$ and $j$ in $\{1,2,\ldots,n\}$, we have $U_{\{1,\ldots,n\}}^\top WV=0$,
which indicates that $W=U_{[m-n]}HV^\top$ for some $H\in \mathbb{R}^{(m-n)\times n}$. This completes the proof.
\end{proof}

Let $X\in \mathbb{R}^{m\times n}$ with its SVD as in \eqref{SVD}. For any given matrices $A^1$, $\ldots$, $A^l\in\mathbb{R}^{m\times n}$. We denote
\begin{equation}\label{Ti}
 T^i_X=
     \left[
      \begin{array}{cc}
      U_{\Gamma} ^\top  A^iV_{\Gamma}&   U_{\Gamma} ^\top  A^i V_{{\Gamma}_n^\bot} \\
       U_{{\Gamma }_m^\bot} ^\top  A^iV_{\Gamma }& 0 \\
     \end{array}
   \right],~~~~~
R^i_X=
     \left[
      \begin{array}{cc}
      U_{\Gamma } ^\top  A^iV_{\Gamma }&   0 \\
       0& 0 \\
     \end{array}
   \right]
\end{equation}
for $i=1,\ldots,l$.
Introduce the following two assumptions.

\begin{assumption}\label{ass1}
The matrices $T^i_X$, $i=1,\ldots, l$, are linearly independent.
\end{assumption}

\begin{assumption}\label{ass2}
The matrices $R^i_X$, $i=1,\ldots, l$, are linearly independent.
\end{assumption}

Here, Assumption \ref{ass1} is called the primal nondegeneracy condition in \cite[Definition 5]{Alizadeh1997Complementarity} in the context of semidefinite programming, and  Assumption \ref{ass2} is a stronger variant of Assumption \ref{ass1}. Let $X$ be a feasible point of the problem \ref{P} with $\text{rank}(X)=s$. By the discussion of \cite[Section 5, Page 480]{bonnans2013perturbation}, we have that Assumption \ref{ass1} can happen only if  $l\leq mn-(m-s)(n-s)$. Similarly, a necessary condition for Assumption \ref{ass2} holding is $l\leq s^2$. Based on these two assumptions, we have the following intersection rule of normal cone.

\begin{Proposition}\label{Assu}
 For any given $X\in\mathcal{L}\cap\mathbb{R}(r)$ with its SVD as in \eqref{SVD}, and any index set $J$ satisfying $\Gamma \subseteq J$, we have
\begin{itemize}
\item[(i)]  If  Assumption \ref{ass1} holds at $X$, then $\mathrm{N}^M_{\mathbb{R}(r)}(X )\cap \mathrm{N}_{\mathcal{L}}(X )=\{0\}$;
\item[ (ii)]  If  Assumption \ref{ass2} holds at $X$, then $\mathrm{N}_{\mathbb{R}_X(J)}(X )\cap \mathrm{N}_{\mathcal{L}}(X )=\{0\}$.
\end{itemize}
\end{Proposition}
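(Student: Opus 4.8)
The plan is to reduce both statements to the linear independence hypotheses by passing to the SVD basis and matching the block structure of $U^\top A^iV$ against the definitions of $T^i_X$ and $R^i_X$. The one standard fact I need is that, since $\mathcal{L}$ is an affine set (hence regular), its normal cone at any feasible $X$ is the span of the constraint data, $\mathrm{N}_{\mathcal{L}}(X)=\{\sum_{i=1}^l\lambda_i A^i:\lambda\in\mathbb{R}^l\}$; this is immediate because the tangent space to $\mathcal{L}$ is $\{H:\langle A^i,H\rangle=0,~\forall i\}$ and the normal cone is its orthogonal complement. Throughout I will use that $W\mapsto U^\top W V$ is a linear isometry (as $U,V$ are orthogonal), so that writing $U^\top WV$ in the $2\times2$ block form induced by the partition $(\Gamma,\Gamma_m^\bot)$ of rows and $(\Gamma,\Gamma_n^\bot)$ of columns, membership in the various normal cones becomes a statement about which blocks vanish.

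For part (i), I would take any $W\in\mathrm{N}^M_{\mathbb{R}(r)}(X)\cap\mathrm{N}_{\mathcal{L}}(X)$ and write $W=\sum_i\lambda_i A^i$. By Lemma \ref{NBF} we have $\mathrm{N}^M_{\mathbb{R}(r)}(X)\subseteq\mathrm{N}_{\mathbb{R}_{s}}(X)$, so $W=U_{\Gamma_m^\bot}DV_{\Gamma_n^\bot}^\top$ for some $D$; transforming to the SVD basis shows that $U^\top WV$ has only its bottom-right $(m-s)\times(n-s)$ block nonzero, i.e.\ its top-left, top-right and bottom-left blocks all vanish. Those three blocks are exactly $\sum_i\lambda_i U_\Gamma^\top A^i V_\Gamma$, $\sum_i\lambda_i U_\Gamma^\top A^i V_{\Gamma_n^\bot}$ and $\sum_i\lambda_i U_{\Gamma_m^\bot}^\top A^i V_\Gamma$, which are precisely the nonzero blocks of $\sum_i\lambda_i T^i_X$. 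Hence $\sum_i\lambda_i T^i_X=0$, and Assumption \ref{ass1} forces $\lambda=0$, so $W=0$. I note that the rank bound $\mathrm{rank}(W)\le n-r$ built into the Mordukhovich cone is not even needed here: the same argument already trivializes the intersection with the larger cone $\mathrm{N}_{\mathbb{R}_{s}}(X)$.

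For part (ii), I would take $W\in\mathrm{N}_{\mathbb{R}_X(J)}(X)\cap\mathrm{N}_{\mathcal{L}}(X)$ and again write $W=\sum_i\lambda_i A^i$. By Lemma \ref{NNN}(i) the first membership reads $U_J^\top WV_J=0$. Since $\Gamma\subseteq J$, the columns of $U_\Gamma$ (resp.\ $V_\Gamma$) form a subcollection of those of $U_J$ (resp.\ $V_J$), so $U_\Gamma^\top WV_\Gamma$ is a submatrix of the zero matrix $U_J^\top WV_J$ and therefore vanishes. Substituting the span representation gives $\sum_i\lambda_i U_\Gamma^\top A^i V_\Gamma=0$, that is $\sum_i\lambda_i R^i_X=0$, whence Assumption \ref{ass2} yields $\lambda=0$ and $W=0$.

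The computations are routine once the block bookkeeping is in place; I do not anticipate a genuine obstacle. The only points requiring care are the correct identification of the vanishing blocks of $U^\top WV$ with the defining blocks of $T^i_X$ in part (i) and of $R^i_X$ in part (ii), and the observation in part (ii) that the inclusion $\Gamma\subseteq J$ is exactly what permits restricting the full condition $U_J^\top WV_J=0$ down to its $\Gamma$-block. The entire content of the proposition lies in translating the geometric normal-cone memberships into the two algebraic nondegeneracy conditions.
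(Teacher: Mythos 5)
Your proof is correct. Part (ii) is essentially identical to the paper's argument: represent $W\in\mathrm{N}_{\mathcal{L}}(X)$ as $\sum_i\lambda_iA^i$, restrict $U_J^\top WV_J=0$ to the $\Gamma$-block using $\Gamma\subseteq J$, and invoke the linear independence of the $R^i_X$. The only divergence is in part (i): the paper does not carry out the block computation itself but instead cites Theorem~6 of Alizadeh--Haeberly--Overton for the equivalence between Assumption~\ref{ass1} and $\mathrm{N}_{\mathbb{R}_s}(X)\cap\mathrm{N}_{\mathcal{L}}(X)=\{0\}$, and then, exactly as you do, passes to the Mordukhovich cone via $\mathrm{N}^M_{\mathbb{R}(r)}(X)\subseteq\mathrm{N}_{\mathbb{R}_s}(X)$. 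Your direct argument --- identifying the three vanishing blocks of $U^\top WV$ for $W\in\mathrm{N}_{\mathbb{R}_s}(X)$ with the three nonzero blocks of $\sum_i\lambda_iT^i_X$ --- is precisely the content of that cited result specialized to this setting, so what you gain is a self-contained, citation-free proof of the one implication actually needed (the cited theorem is an equivalence, of which only one direction is used). Your side remark that the rank bound $\mathrm{rank}(W)\le n-r$ in the Mordukhovich cone plays no role is also consistent with the paper, which likewise only uses the containment in $\mathrm{N}_{\mathbb{R}_s}(X)$.
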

\begin{proof} 
(i) By virtue of Theorem 6 in \cite {Alizadeh1997Complementarity}, Assumption~1 holds at $X $ if and only if $\mathrm{N}_{\mathbb{R}_s}(X )\cap \mathrm{N}_{\mathcal{L}}(X )=\{0\}$.  The desired assertion in (i) then follows from the fact $\mathrm{N}^M_{\mathbb{R}(r)}(X )\subseteq \mathrm{N}_{\mathbb{R}_s}(X )$.

(ii) Assume on the contrary that there exists a nonzero matrix $W\in \mathrm{N}_{\mathbb{R}_X(J)}(X )\cap \mathrm{N}_{\mathcal{L}}(X )$, that is, there exist $t^i\in\mathbb{R}$ ($i=1,\ldots, l$) not all zero such that $\sum\limits_{i=1}^l t^iA^i \in  \mathrm{N}_{\mathbb{R}_X(J)}(X )$. From \eqref{TNspace},  we get $$ U_{J} ^\top  \sum\limits_{i=1}^l t^iA^iV_{J}=0,$$
 which implies  that $ U_{\Gamma } ^\top  \sum\limits_{i=1}^lt^iA^iV_{\Gamma }=0$. Thus
\begin{eqnarray*}
\sum_{i=1}^lt^iR_X^i=
     \left[
      \begin{array}{cc}
     U_{\Gamma } ^\top  \sum\limits_{i=1}^l t^iA^iV_{\Gamma }&   0 \\
      0& 0 \\
     \end{array}
   \right]
=0,
   \end{eqnarray*}
   which contradicts to the linear independence of $R_X^i$'s in Assumption \ref{ass2}. Thus, we have $\mathrm{N}_{\mathbb{R}_X(J)}(X )\cap \mathrm{N}_{\mathcal{L}}(X )=\{0\}$. This completes the proof.
 \end{proof}

\begin{lemma}\label{TBeq}
Let $X \in\mathcal{L}\cap\mathbb{R}(r)$ with its SVD as in \eqref{SVD}. If Assumption \ref{ass2} holds at $X$, then
\begin{equation}\label{Nequ}
\mathrm{T}^B_{\mathcal{L}\cap \mathbb{R}_X(r)}(X )=\mathrm{T}_{\mathcal{L}}(X )\cap \mathrm{T}^B_{\mathbb{R}_X(r)}(X ).
\end{equation}
\end{lemma}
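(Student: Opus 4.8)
The plan is to establish the full equality \eqref{Nequ} in one stroke by exploiting the union structure of $\mathbb{R}_X(r)$, which reduces the claim to the regular intersection rule \eqref{06} applied to pairs of \emph{subspaces} (the inclusion ``$\subseteq$'' is in any case automatic from \eqref{04}). Concretely, I would first observe that since $\mathbb{R}_X(r)=\bigcup_{J\in\mathcal{J}}\mathbb{R}_X(J)$, intersecting with $\mathcal{L}$ gives
$$\mathcal{L}\cap\mathbb{R}_X(r)=\bigcup_{J\in\mathcal{J}}\left(\mathcal{L}\cap\mathbb{R}_X(J)\right).$$
Because $\Gamma\subseteq J$ for every $J\in\mathcal{J}$, the point $X$ lies in each set $\mathcal{L}\cap\mathbb{R}_X(J)$ of this finite union (indeed $X=U_J A V_J^\top$ with $A$ carrying $\Sigma_{\Gamma\Gamma}(X)$ on the $\Gamma$-block), so the union formula \eqref{03} for Bouligand tangent cones yields $\mathrm{T}^B_{\mathcal{L}\cap\mathbb{R}_X(r)}(X)=\bigcup_{J\in\mathcal{J}}\mathrm{T}^B_{\mathcal{L}\cap\mathbb{R}_X(J)}(X)$.

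Next I would analyze each fixed $J\in\mathcal{J}$. Here $\mathcal{L}$ is an affine set and $\mathbb{R}_X(J)$ is a linear subspace, so both are regular at $X$, and for each the Mordukhovich normal cone coincides with the Fr\'{e}chet one and is symmetric (being a subspace for $\mathbb{R}_X(J)$, and a translate of the direction subspace $\mathcal{L}-X$ for $\mathcal{L}$, whose normal cone is again a subspace). Hence the basic qualification condition needed for \eqref{06}, namely $\mathrm{N}^M_{\mathcal{L}}(X)\cap(-\mathrm{N}^M_{\mathbb{R}_X(J)}(X))=\{0\}$, is equivalent to $\mathrm{N}_{\mathcal{L}}(X)\cap\mathrm{N}_{\mathbb{R}_X(J)}(X)=\{0\}$, which is precisely the conclusion of Proposition \ref{Assu}(ii) under Assumption \ref{ass2}. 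Therefore \eqref{06} applies and gives, for every $J\in\mathcal{J}$,
$$\mathrm{T}^B_{\mathcal{L}\cap\mathbb{R}_X(J)}(X)=\mathrm{T}_{\mathcal{L}}(X)\cap\mathrm{T}_{\mathbb{R}_X(J)}(X)=\mathrm{T}_{\mathcal{L}}(X)\cap\mathbb{R}_X(J),$$
using that the tangent cone to the subspace $\mathbb{R}_X(J)$ at $X$ is $\mathbb{R}_X(J)$ itself.

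Finally I would recombine. Substituting the per-$J$ identity into the union and distributing the intersection over the (finite) union gives
$$\mathrm{T}^B_{\mathcal{L}\cap\mathbb{R}_X(r)}(X)=\bigcup_{J\in\mathcal{J}}\left(\mathrm{T}_{\mathcal{L}}(X)\cap\mathbb{R}_X(J)\right)=\mathrm{T}_{\mathcal{L}}(X)\cap\bigcup_{J\in\mathcal{J}}\mathbb{R}_X(J).$$
On the other hand, applying \eqref{03} again together with $\mathrm{T}^B_{\mathbb{R}_X(J)}(X)=\mathbb{R}_X(J)$ shows $\mathrm{T}^B_{\mathbb{R}_X(r)}(X)=\bigcup_{J\in\mathcal{J}}\mathbb{R}_X(J)$, so the right-hand side above is exactly $\mathrm{T}_{\mathcal{L}}(X)\cap\mathrm{T}^B_{\mathbb{R}_X(r)}(X)$, which is \eqref{Nequ}. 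I expect the main obstacle to be the middle step: $\mathbb{R}_X(r)$ is neither convex nor a smooth manifold, so \eqref{06} cannot be applied to the pair $(\mathcal{L},\mathbb{R}_X(r))$ directly; the essential idea is to descend to the individual subspaces $\mathbb{R}_X(J)$, where regularity holds and the qualification condition is supplied exactly by Assumption \ref{ass2} through Proposition \ref{Assu}(ii). A secondary point needing care is confirming that $X$ belongs to every member of the union, so that the tangent-cone union rule \eqref{03} is legitimately invoked.
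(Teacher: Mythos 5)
Your proposal is correct and follows essentially the same route as the paper: decompose $\mathcal{L}\cap\mathbb{R}_X(r)$ into the finite union over $J\in\mathcal{J}$, apply the regular intersection rule \eqref{06} to each pair $(\mathcal{L},\mathbb{R}_X(J))$ with the qualification condition supplied by Proposition \ref{Assu}(ii), and recombine via \eqref{03} and distributivity of intersection over union. Your extra remarks (checking that $X$ lies in every $\mathcal{L}\cap\mathbb{R}_X(J)$, and that the normal cones are subspaces so the sign in the qualification condition is immaterial) merely make explicit what the paper leaves implicit.
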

\begin{proof}
Note that $\mathcal{L}$ and $\mathbb{R}_X(J)$ are regular at $X$. Since Assumption \ref{ass2} holds at $X$, from Proposition \ref{Assu} (ii) and \eqref{06}, we obtain that
$$\mathrm{T}_{\mathcal{L}\cap\mathbb{R}_X(J)}(X )=\mathrm{T}_{\mathcal{L}}(X )\cap \mathrm{T}_{\mathbb{R}_X(J)}(X ).$$
This together with \eqref{03} yields 
\begin{eqnarray}\label{T}
\mathrm{T}^B_{\mathcal{L}\cap\mathbb{R}_X(r)}(X )&=&\bigcup \limits_{J\in\mathcal{J} }\mathrm{T}_{\mathcal{L}\cap\mathbb{R}_X(J)}(X )
=\bigcup \limits_{J\in\mathcal{J} }\left(\mathrm{T}_{\mathcal{L}}(X )\cap \mathrm{T}_{\mathbb{R}_X(J)}(X )\right)\\\nonumber
&=&\mathrm{T}_{\mathcal{L}}(X )\cap \left(\bigcup \limits_{J\in\mathcal{J} }\mathrm{T}_{\mathbb{R}_X(J)}(X )\right)\\\nonumber
&=&\mathrm{T}_{\mathcal{L}}(X )\cap \mathrm{T}^B_{\bigcup \limits_{J\in\mathcal{J} }\mathbb{R}_X(J)}(X )\\\nonumber
&=&\mathrm{T}_{\mathcal{L}}(X )\cap \mathrm{T}^B_{\mathbb{R}_X(r)}(X ).
\end{eqnarray}
This completes the proof.
\end{proof}

Based on the above result, we investigate the intersection rule of the Fr\'{e}chet normal cone to $\mathcal{L}\cap\mathbb{R}(r)$.

\begin{theorem}\label{NFeq}
Let $X \in\mathcal{L}\cap\mathbb{R}(r)$ with $\mathrm{rank}(X )=s$.
\begin{itemize}
\item[(i)] If $s=r$ and Assumption \ref{ass1} holds at $X$, then
\begin{equation}\label{Nequ1}
\mathrm{N}^F_{\mathcal{L}\cap\mathbb{R}(r)}(X )=\mathrm{N}_{\mathcal{L}}(X )+\mathrm{N}^F_{\mathbb{R}(r)}(X ).
\end{equation}
\item[(ii)] If $s<r$ and Assumption \ref{ass2} holds at $X$, then
\begin{equation}\label{Nequ2}
\mathrm{N}^F_{\mathcal{L}\cap\mathbb{R}(r)}(X)\subseteq \mathrm{N}^F_{\mathcal{L}\cap\mathbb{R}_X(r)}(X)=\mathrm{N}_{\mathcal{L}}(X)+\mathrm{N}^F_{\mathbb{R}_X(r)}(X ).
\end{equation}

If, in addition, $\mathrm{N}^F_{\mathbb{R}_X(r)}(X )\subseteq  \mathrm{T}_{\mathcal{L}}(X)$(this is true in the case of $s\leq r-2$ and $m=n$), then
\begin{equation}\label{Nequ3}
\mathrm{N}^F_{\mathcal{L}\cap\mathbb{R}(r)}(X)=\mathrm{N}_{\mathcal{L}}(X )+\mathrm{N}^F_{\mathbb{R}(r)}(X ) = \mathrm{N}_{\mathcal{L}}(X ).
\end{equation}
\end{itemize}
\end{theorem}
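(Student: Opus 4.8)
The plan is to prove the three assertions one at a time, in each case reducing to the regular-intersection calculus \eqref{06} and the union/polar identities \eqref{01}, \eqref{03}, combined with the explicit normal-cone descriptions in Lemmas \ref{NBF} and \ref{NNN}. For (i) the key point is that $\mathbb{R}(r)$ is \emph{itself} regular at a rank-$r$ matrix: when $s=r$ every element of $\mathrm{N}_{\mathbb{R}_s}(X)=\{U_{\Gamma_m^\perp}DV_{\Gamma_n^\perp}^\top\}$ has rank at most $\min(m-s,n-s)=n-r$, so Lemma \ref{NBF} gives $\mathrm{N}^M_{\mathbb{R}(r)}(X)=\mathrm{N}_{\mathbb{R}_s}(X)=\mathrm{N}^F_{\mathbb{R}(r)}(X)$, i.e. regularity of $\mathbb{R}(r)$ at $X$. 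Since $\mathcal{L}$ is affine, hence regular with $\mathrm{N}_{\mathcal{L}}(X)$ a subspace (so $-\mathrm{N}_{\mathcal{L}}(X)=\mathrm{N}_{\mathcal{L}}(X)$), Proposition \ref{Assu}(i) supplies the basic qualification $\mathrm{N}^M_{\mathbb{R}(r)}(X)\cap(-\mathrm{N}_{\mathcal{L}}(X))=\{0\}$ under Assumption \ref{ass1}, and \eqref{06} yields \eqref{Nequ1} at once.

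For the equality in (ii) I first observe $X\in\mathbb{R}_X(J)\subseteq\mathbb{R}_X(r)\subseteq\mathbb{R}(r)$, so $\mathcal{L}\cap\mathbb{R}_X(r)\subseteq\mathcal{L}\cap\mathbb{R}(r)$ and, because a smaller set has a larger Fr\'echet normal cone, $\mathrm{N}^F_{\mathcal{L}\cap\mathbb{R}(r)}(X)\subseteq\mathrm{N}^F_{\mathcal{L}\cap\mathbb{R}_X(r)}(X)$, which is the first line of \eqref{Nequ2}. To compute the right-hand side I polarize the tangent identity of Lemma \ref{TBeq}: using \eqref{T} and \eqref{01}, $\mathrm{N}^F_{\mathcal{L}\cap\mathbb{R}_X(r)}(X)=\big(\bigcup_J \mathrm{T}_{\mathcal{L}\cap\mathbb{R}_X(J)}(X)\big)^\circ=\bigcap_J \mathrm{N}^F_{\mathcal{L}\cap\mathbb{R}_X(J)}(X)$, and since $\mathcal{L}$ and each $\mathbb{R}_X(J)$ are regular and satisfy the qualification $\mathrm{N}_{\mathbb{R}_X(J)}(X)\cap\mathrm{N}_{\mathcal{L}}(X)=\{0\}$ (Proposition \ref{Assu}(ii)), \eqref{06} turns each term into $\mathrm{N}_{\mathcal{L}}(X)+\mathrm{N}_{\mathbb{R}_X(J)}(X)$. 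It then remains to prove the distributive law $\bigcap_J(\mathrm{N}_{\mathcal{L}}(X)+\mathrm{N}_{\mathbb{R}_X(J)}(X))=\mathrm{N}_{\mathcal{L}}(X)+\bigcap_J\mathrm{N}_{\mathbb{R}_X(J)}(X)$, whose right side equals $\mathrm{N}_{\mathcal{L}}(X)+\mathrm{N}^F_{\mathbb{R}_X(r)}(X)$ by \eqref{J0}. The inclusion ``$\supseteq$'' is automatic; ``$\subseteq$'' is the technical heart. Given $W$ in the left side, the qualification makes each decomposition $W=\sum_i t^i_J A^i+M_J$ with $M_J\in\mathrm{N}_{\mathbb{R}_X(J)}(X)$ unique; restricting the defining relation $U_J^\top M_J V_J=0$ to the $\Gamma\times\Gamma$ sub-block (legitimate since $\Gamma\subseteq J$) gives $U_\Gamma^\top W V_\Gamma=\sum_i t^i_J\,U_\Gamma^\top A^i V_\Gamma$, an equation whose data do not depend on $J$ and whose coefficient matrices $U_\Gamma^\top A^iV_\Gamma$ are linearly independent by Assumption \ref{ass2}. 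Hence the $t^i_J$ coincide for all $J$; the common $N=\sum_i t^i A^i\in\mathrm{N}_{\mathcal{L}}(X)$ and $W-N\in\bigcap_J\mathrm{N}_{\mathbb{R}_X(J)}(X)$ give ``$\subseteq$'', establishing \eqref{Nequ2}.

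For the last assertion \eqref{Nequ3}, the second equality is immediate since $s<r$ forces $\mathrm{N}^F_{\mathbb{R}(r)}(X)=\{0\}$ by Lemma \ref{NBF}. For the first, ``$\supseteq$'' holds by \eqref{04}; for ``$\subseteq$'' I use that $\mathrm{N}^F_{\mathbb{R}_X(r)}(X)$ is spanned by \emph{rank-one} matrices $\Xi$ (the off-diagonal blocks $U_{\Gamma_n^\perp}e_ie_j^\top V_{\Gamma_n^\perp}^\top$ with $i\ne j$ and the generators $u_{n+k}v_j^\top$ from Lemma \ref{NNN}), each of which lies in $\mathrm{N}^F_{\mathbb{R}_X(r)}(X)\subseteq\mathrm{T}_{\mathcal{L}}(X)=\mathrm{N}_{\mathcal{L}}(X)^\perp$ by hypothesis. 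Because $\mathrm{rank}(X+t\Xi)\le s+1\le r$, the entire line $X+t\Xi$ stays in $\mathcal{L}\cap\mathbb{R}(r)$, so $\pm\Xi\in\mathrm{T}^B_{\mathcal{L}\cap\mathbb{R}(r)}(X)$; hence any $W\in\mathrm{N}^F_{\mathcal{L}\cap\mathbb{R}(r)}(X)$ satisfies $\langle W,\Xi\rangle=0$ for every such generator, i.e. $W\perp\mathrm{N}^F_{\mathbb{R}_X(r)}(X)$. Writing $W=N+M$ via \eqref{Nequ2} with $N\in\mathrm{N}_{\mathcal{L}}(X)$ and $M\in\mathrm{N}^F_{\mathbb{R}_X(r)}(X)$, orthogonality yields $0=\langle W,M\rangle=\langle N,M\rangle+\|M\|_F^2=\|M\|_F^2$, so $M=0$ and $W\in\mathrm{N}_{\mathcal{L}}(X)$.

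I expect the distributive-law step in (ii) to be the main obstacle, because the identity $\bigcap_J(\mathrm{N}_{\mathcal{L}}+\mathrm{N}_{\mathbb{R}_X(J)})=\mathrm{N}_{\mathcal{L}}+\bigcap_J\mathrm{N}_{\mathbb{R}_X(J)}$ is false for arbitrary subspaces and survives here only because Assumption \ref{ass2}, read off on the $\Gamma\times\Gamma$ block, pins down a single common $\mathrm{N}_{\mathcal{L}}$-component of $W$ across all $J\in\mathcal{J}$; the subtlety in \eqref{Nequ3} is a close second, since one must detect $M$ through rank-one tangent directions rather than through $M$ itself, which generally fails to be tangent to $\mathbb{R}(r)$.
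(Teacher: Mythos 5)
Your proposal is correct and follows essentially the same route as the paper: regularity of $\mathbb{R}(r)$ at a rank-$r$ point plus Proposition \ref{Assu}(i) and the calculus rule \eqref{06} for (i); polarizing Lemma \ref{TBeq} into $\bigcap_{J}\bigl(\mathrm{N}_{\mathcal{L}}(X)+\mathrm{N}_{\mathbb{R}_X(J)}(X)\bigr)$ and using Assumption \ref{ass2} on the $\Gamma\times\Gamma$ block to force a common multiplier vector for (ii); and testing $W$ against the rank-one generators of $\mathrm{N}^F_{\mathbb{R}_X(r)}(X)$, which are tangent to $\mathcal{L}\cap\mathbb{R}(r)$, for \eqref{Nequ3}. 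The only cosmetic difference is in the last step, where you conclude $M=0$ by an orthogonality/Pythagoras identity while the paper argues entrywise by contradiction via $\langle\widetilde{W}_2,W\rangle=D_{i_0j_0}^2>0$; the underlying idea is identical.
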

\begin{proof}
(i) If $s=r$, it is known from Lemma \ref{NBF} that $\mathrm{N}^F_{\mathbb{R}(r)}(X ) = \mathrm{N}_{\mathbb{R}^s}(X)= \mathrm{N}^M_{\mathbb{R}(r)}(X )$. Thus, in this case, $\mathbb{R}(r)$ is regular at $X$. Together with the regularity of the convex set $\mathcal{L}$ at $X$, we obtain that
$\mathcal{L}\cap\mathbb{R}(r)$ is also regular at $X$, i.e., $\mathrm{N}^F_{\mathcal{L}\cap\mathbb{R}(r)}(X) = \mathrm{N}^M_{\mathcal{L}\cap\mathbb{R}(r)}(X)$. By utilizing (i) of Proposition \ref{Assu}, Assumption \ref{ass1} ensures that
$\mathrm{N}^M_{\mathbb{R}(r)}(X)\cap \left(-\mathrm{N}_{\mathcal{L}}(X)\right) = \mathrm{N}^M_{\mathbb{R}(r)}(X)\cap \mathrm{N}_{\mathcal{L}}(X) =\{0\}$. Thus, $\mathrm{N}^F_{\mathcal{L}\cap\mathbb{R}(r)}(X)\subseteq \mathrm{N}^F_{\mathbb{R}(r)}(X) + \mathrm{N}_{\mathcal{L}}(X)$. Combining with the second inclusion in \eqref{04}, the desired assertion is obtained.

(ii) The first inclusion in  \eqref{Nequ2} follows readily from \eqref{SX}. For the remaining equality, by virtue of the second inclusion in \eqref{04}, it suffices to show
$$\mathrm{N}^F_{\mathcal{L}\cap\mathbb{R}_X(r)}(X)\subseteq \mathrm{N}_{\mathcal{L}}(X )+\mathrm{N}^F_{\mathbb{R}_X(r)}(X ).$$ From Lemma \ref{TBeq}, \eqref{01}  and \eqref{02}, we obtain that
\begin{eqnarray}\label{N}
\mathrm{N}^F_{\mathcal{L}\cap\mathbb{R}_X(r)}(X )&=&\left(\mathrm{T}^B_{\mathcal{L}\cap\mathbb{R}_X(r)}(X)\right)^\circ
=\left(\bigcup \limits_{J\in\mathcal{J} }\mathrm{T}^B_{\mathcal{L}}(X )\cap \mathrm{T}^B_{\mathbb{R}_X(J)}(X )\right)^\circ\\\nonumber
&=&\bigcap \limits_{J\in\mathcal{J} }\left(\mathrm{T}^B_{\mathcal{L}}(X )\cap \mathrm{T}^B_{\mathbb{R}_X(J)}(X ) \right)^\circ\\\nonumber
&=&\bigcap \limits_{J\in\mathcal{J} }\left(\mathrm{N}_{\mathcal{L}}(X )+ \mathrm{N}_{\mathbb{R}_X(J)}(X ) \right).
\end{eqnarray}
For any $H\in \mathrm{N}^F_{\mathcal{L}\cap\mathbb{R}_X(r)}(X )$, we have $H\in \mathrm{N}_{\mathcal{L}}(X )+ \mathrm{N}^F_{\mathbb{R}_{X}(J)}(X )$, for any $J\in\mathcal{J}$, that is, there exist $t^i(J)\in\mathbb{R}$,  $W(J)\in \mathrm{N}_{\mathbb{R}_X(J)}(X ),$
  such that
  \begin{equation}\label{HW}
  H=\sum_{i=1}^l t^i(J)A^i+W(J), ~\forall~J\in\mathcal{J}.
   \end{equation}
Note that for each $J\in \mathcal{J}$, it holds that $U_{\Gamma}^\top W(J)V_{\Gamma}=0$.
  Pre- and post-multiplying both sides of the equation \eqref{HW} by $U_{\Gamma}^\top$ and $V_{\Gamma}$, respectively, we obtain
$$U_{\Gamma}^\top HV_{\Gamma}=\sum_{i=1}^l t^i(J)U_{\Gamma}^\top A^i V_{\Gamma}, ~\forall~J\in\mathcal{J}.$$
For any distinct index set $J_0\in\mathcal{J}$, we can also get that
  $$U_{\Gamma}^\top HV_{\Gamma}=\sum_{i=1}^l t^i(J_0)U_{\Gamma}^\top A^i V_{\Gamma},  ~\forall~J_0(\neq J)\in\mathcal{J}.$$
  Invoking the linear independence in Assumption \ref{ass2}, we have
  $$t^i(J)=t^i(J_0)=:t^i, ~\forall~J,J_0\in\mathcal{J}.$$
  This implies that
  $$H-\sum_{i=1}^l t^iA^i\in\mathrm{N}_{\mathbb{R}_X(J)}(X),~\forall~J\in\mathcal{J},$$
  i.e.,
  $$H-\sum_{i=1}^l t^iA^i\in \bigcap \limits_{J\in\mathcal{J}} \mathrm{N}_{\mathbb{R}_X(J)}(X)= \mathrm{N}^F_{\mathbb{R}_X(r)}(X).$$
  Thus, $H\in \mathrm{N}_{\mathcal{L}}(X)+\mathrm{N}^F_{\mathbb{R}_X(r)}(X)$, which indicates that $\mathrm{N}^F_{\mathcal{L}\cap\mathbb{R}(r)}(X)\subseteq \mathrm{N}_{\mathcal{L}}(X )+\mathrm{N}^F_{\mathbb{R}_X(r)}(X )$.
This yields the assertions in \eqref{Nequ2}.

To show the remaining part, it is sufficient to show $\mathrm{N}^F_{\mathcal{L}\cap\mathbb{R}(r)}(X )\subseteq \mathrm{N}_{\mathcal{L}}(X)$ due to the second inclusion in \eqref{04} and the fact $\mathrm{N}^F_{\mathbb{R}(r)}(X ) = \{0\}$ for $s<r$. Consider any given $W\in \mathrm{N}^F_{\mathcal{L}\cap\mathbb{R}(r)}(X )$. As one can see from \eqref{Nequ2}, there exist $W_1\in \mathrm{N}_{\mathcal{L}}(X )$ and $W_2 \in \mathrm{N}^F_{\mathbb{R}_X(r)}(X )$ such that $W= W_1+W_2$. It then suffices to show $W_2=0$. This is trivial for the case of $m=n$ with $s \leq r-2$ since in this case $\mathrm{N}^F_{\mathbb{R}_X(r)}(X ) =\{0\}$ as stated in \eqref{TNspace1} in Lemma \ref{NNN}. Now we consider the case $m>n$ in two  cases.

\noindent(a) If $s = r-1$, we assume on the contrary that $W_2\neq 0$. By invoking the characterization in \eqref{TNspace1} in Lemma \ref{NNN}, there exist $D\in \mathbb{R}^{(n-s)\times (n-s)}$ with $diag(D) =0$, and $H\in \mathbb{R}^{(m-n)\times n}$ such that $W_2 = U_{\Gamma_n^{\bot}}DV^\top_{\Gamma_n^\bot}+U_{[m-n]}HV^\top$.

\noindent(a1) If $D\neq 0$, then let $i_0$, $j_0$ be any two distinct indices such that $D_{i_0j_0} \neq 0$. Set $\widetilde{W}_2 = U_{\Gamma_n^{\bot}}\widetilde{D}V^\top_{\Gamma_n^\bot}$ with $\widetilde{D}_{ij} = D_{ij}$ if $(i,j) = (i_0, j_0)$, and $\widetilde{D}_{ij} = 0$ otherwise. It is easy to verify that $\widetilde{W}_2\in \mathrm{N}^F_{\mathbb{R}_X(r)}(X ) \subseteq  \mathrm{T}_{\mathcal{L}}(X)$ and hence $\langle \widetilde{W}_2, W_1\rangle =0$. Choose a sequence of matrices
$$X_k = X + t_k \widetilde{W}_2, ~~k = 1, 2, \ldots, $$
with  $t_k>0$ and $\lim\limits_{k\rightarrow \infty} t_k =0$. Note that for each $k$, $\text{rank} (X_k) = s +1 = r$ and $\langle  A^i, X_k\rangle = b_i$ for all $i=1, \ldots, l$. Thus, $X_k \in \mathcal{L}\cap \mathbb{R}(r)$ for all $k$, which shows that $\widetilde{W}_2 \in \mathrm{T}^B_{\mathcal{L}\cap \mathbb{R}(r)}(X)$. By some direct calculation,
$$ \langle \widetilde{W}_2, W\rangle = \langle \widetilde{W}_2, W_1\rangle + \langle \widetilde{W}_2, W_2\rangle = D_{i_0j_0}^2 >0.$$
Since $\mathrm{N}^F_{\mathcal{L}\cap \mathbb{R}(r)}(X)=\left[\mathrm{T}^B_{\mathcal{L}\cap \mathbb{R}(r)}(X)\right]^{\circ}$. This
contradicts to $W\in \mathrm{N}^F_{\mathcal{L}\cap\mathbb{R}(r)}(X )$. Thus, $D =0$.

\noindent(a2) Similarly, if $H\neq 0$, then choose any two indices $i_0\in \{1, \ldots, m-n\}$ and $j_0\in \{1,\ldots, n\}$ satisfying $H_{i_0j_0}\neq 0$. By setting $\widetilde{W}_2 = U_{[m-n]}\widetilde{H}V^\top$ with $\widetilde{H}_{ij} = H_{ij}$ if $(i,j) = (i_0, j_0)$, and $\widetilde{H}_{ij} = 0$ otherwise, we can also verify that $\widetilde{W}_2\in \mathrm{N}^F_{\mathbb{R}_X(r)}(X ) \subseteq  \mathrm{T}_{\mathcal{L}}(X)$. By the same proof as above, we can conclude that $H=0$. Thus, $W_2=0$.

\noindent (b) If $s\leq r-2$, the proof is the same with the one for showing $H=0$ in (a2). This completes the proof.
\end{proof}


The condition $\mathrm{N}^F_{\mathbb{R}_X(r)}(X )\subseteq  \mathrm{T}_{\mathcal{L}}(X)$ in the above theorem seems  strong, but it can be naturally satisfied in some special cases. We see the following example and two corollaries.

\begin{example}Consider the matrix space $\mathbb{R}^{5\times 4}$. Let $e_i\in \mathbb{R}^4$ be the $i$-th column of the identity matrix, $A^1 = [e_1e_1^\top ~0]^\top\in \mathbb{R}^{5\times 4}$, $A^2 = [e_2 e_2^\top~ 0]^\top \in\mathbb{R}^{5\times 4}$, $b = [1,1]^\top$, $r = 3$ and $X_0 = [e_1e_1^\top +e_2e_2^\top ~0]^\top\in \mathbb{R}^{5\times 4}$. By direct calculations, we have
$$\mathcal{L} = \left\{X\in \mathbb{R}^{5\times 4}: X_{11} = X_{22} = 1\right\}, ~~\mathrm{N}_{\mathcal{L}}(X_0) = \left\{\alpha_1 A^1 + \alpha_2 A^2: \alpha_1, \alpha_2\in \mathbb{R}\right\},$$
$$\mathrm{T}_{\mathcal{L}}(X_0)= \left\{X\in \mathbb{R}^{5\times 4}: X_{11} = X_{22} = 0\right\}.$$
Choose $U=I_5,~V=I_4$ to get the SVD of $X_0$. Then $\Gamma = \{1,2\}$, and Assumption \ref{ass2} holds at $X_0$ since $U_{\Gamma}^\top A^1 V_{\Gamma} = \left(
                                                                                                      \begin{array}{cc}
                                                                                                        1 & 0 \\
                                                                                                        0 & 0 \\
                                                                                                      \end{array}
                                                                                                    \right)$,
and $U_{\Gamma}^\top A^2 V_{\Gamma} = \left( \begin{array}{cc}
 0 & 0 \\
  0 & 1 \\
\end{array}
\right)$ which are linearly independent. Moreover, as known from Lemma \ref{NNN},
$$\mathrm{N}^F_{\mathbb{R}_X(r)}(X_0)=\left\{[\beta_1 e_3e_4^\top+\beta_2 e_4e_3^\top~u]^\top: \beta_1, \beta_2\in \mathbb{R}, u\in\mathbb{R}^4 \right\},$$
which is obviously a proper subset of $\mathrm{T}_{\mathcal{L}}(X_0)$.
\end{example}

Let $\mathbb{S}^n$ be the real $n\times n$ symmetric matrix space. It can be verified that the intersection rule as stated in Theorem \ref{NFeq} is also valid in the setting of $\mathbb{S}^n$ equipped with the eigenvalue decomposition, by transferring all the involved notation to the corresponding symmetric variants. Particularly, we have the following special instance where the intersection rule is obtained automatically.

%
\begin{corollary}
Let $X \in\mathcal{L}_{\Delta}\cap\mathbb{S}(r)$ with $$\mathcal{L}_{\Delta}:=\{X\in\mathbb{S}^{n}: \text{tr}(X)=1\},~\mathbb{S}(r):=\{X\in\mathbb{S}^{n}: \text{rank}(X)\leq r\}.$$ Then
\begin{equation}\label{tr}
\mathrm{N}^F_{\mathcal{L}_{\Delta}\cap\mathbb{S}(r)}(X )=\mathrm{N}_{\mathcal{L}}(X )+\mathrm{N}^F_{\mathbb{S}(r)}(X ).
\end{equation}
\end{corollary}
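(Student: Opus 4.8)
The plan is to deduce this corollary directly from Theorem \ref{NFeq}, by checking that for the single trace constraint its hypotheses hold automatically. Here $\mathcal{L}_\Delta$ is an affine set with a single datum, namely $l=1$ and $A^1=I_n$, so that $\mathrm{N}_{\mathcal{L}_\Delta}(X)=\{tI_n:t\in\mathbb{R}\}$ and $\mathrm{T}_{\mathcal{L}_\Delta}(X)=\{H\in\mathbb{S}^n:\mathrm{tr}(H)=0\}$. Fix the eigenvalue decomposition $X=P\Lambda P^\top$ with $P\in\mathcal{O}^n$, and let $\Gamma$ index the $s=\mathrm{rank}(X)$ nonzero eigenvalues, so that $P_\Gamma^\top P_\Gamma=I_s$ and $P_\Gamma^\top P_{\Gamma_n^\bot}=0$. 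The argument then splits according to whether $s=r$ or $s<r$, matching the two branches of Theorem \ref{NFeq}.

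First I would verify Assumptions \ref{ass1} and \ref{ass2}. Transferring the definitions in \eqref{Ti} to the symmetric setting (with $U=V=P$) and using the orthonormality above, the blocks $P_\Gamma^\top A^1P_\Gamma$, $P_\Gamma^\top A^1P_{\Gamma_n^\bot}$ and $P_{\Gamma_n^\bot}^\top A^1P_\Gamma$ collapse to $I_s$, $0$ and $0$, so that
$$R^1_X=T^1_X=\begin{bmatrix}I_s&0\\0&0\end{bmatrix}\neq 0.$$
A single nonzero matrix is trivially linearly independent, so both Assumption \ref{ass1} and Assumption \ref{ass2} hold at every feasible $X$. In particular, when $s=r$, Theorem \ref{NFeq}(i) immediately yields the identity \eqref{tr}.

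It remains to treat $s<r$. Here Assumption \ref{ass2} already supplies the inclusion and first equality of \eqref{Nequ2}, and to upgrade to the full identity \eqref{Nequ3} I must verify the extra hypothesis $\mathrm{N}^F_{\mathbb{S}_X(r)}(X)\subseteq\mathrm{T}_{\mathcal{L}_\Delta}(X)$. Since $m=n$ in the symmetric setting, the symmetric analogue of \eqref{TNspace1} in Lemma \ref{NNN} gives $\mathrm{N}^F_{\mathbb{S}_X(r)}(X)=\{0\}$ when $s\le r-2$, making the inclusion trivial. The decisive case is $s=r-1$, where the same formula yields $W=P_{\Gamma_n^\bot}DP_{\Gamma_n^\bot}^\top$ with $D\in\mathbb{S}^{\,n-s}$ and $\mathrm{diag}(D)=0$. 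Using cyclicity of the trace together with $P_{\Gamma_n^\bot}^\top P_{\Gamma_n^\bot}=I_{n-s}$, I compute $\mathrm{tr}(W)=\mathrm{tr}(DP_{\Gamma_n^\bot}^\top P_{\Gamma_n^\bot})=\mathrm{tr}(D)=0$, so $W\in\mathrm{T}_{\mathcal{L}_\Delta}(X)$. Hence the extra hypothesis holds, and \eqref{Nequ3} gives $\mathrm{N}^F_{\mathcal{L}_\Delta\cap\mathbb{S}(r)}(X)=\mathrm{N}_{\mathcal{L}_\Delta}(X)+\mathrm{N}^F_{\mathbb{S}(r)}(X)$ (the second summand being $\{0\}$), which is exactly \eqref{tr}.

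The only genuine obstacle is the $s=r-1$ branch: the parenthetical remark in Theorem \ref{NFeq} guarantees $\mathrm{N}^F_{\mathbb{S}_X(r)}(X)\subseteq\mathrm{T}_{\mathcal{L}_\Delta}(X)$ only for $s\le r-2$, so this case is not automatic and must be checked directly. The verification hinges entirely on the zero-diagonal structure of $D$ forcing $\mathrm{tr}(W)=0$; it is precisely the trace constraint that renders the second-block normal directions tangent to $\mathcal{L}_\Delta$. Everything else is routine bookkeeping in the transfer from $\mathbb{R}^{m\times n}$ to $\mathbb{S}^n$.
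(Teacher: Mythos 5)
Your proposal is correct and follows essentially the same route as the paper: both reduce the corollary to the symmetric version of Theorem \ref{NFeq} by noting that the single matrix $P_\Gamma^\top I_n P_\Gamma = I_s$ is trivially linearly independent (so Assumption \ref{ass2}, and hence Assumption \ref{ass1}, holds automatically), and both dispose of the decisive $s=r-1$ case by computing $\mathrm{tr}(P_{\Gamma_n^\bot}DP_{\Gamma_n^\bot}^\top)=\mathrm{tr}(D)=0$ from the zero-diagonal structure of $D$. Your explicit separation of the $s=r$ branch and the $s\le r-2$ branch matches the paper's treatment in substance.
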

\begin{proof}
Let $
X =U \Lambda(X ){U} ^\top = [U_{\Gamma}~ U_{\Gamma_n^\perp} ]
     \left[
      \begin{array}{cc}
       {\Lambda}_{\Gamma\Gamma }(X ) & 0 \\
       0 & 0 \\
     \end{array}
   \right][ U_{\Gamma } ~ {U}_{\Gamma_n^\perp} ] ^\top$ be its eigenvalue decomposition, where $\Gamma$ is the index set corresponding to those nonzero eigenvalues of $X$. Similar to \eqref{SX}, we introduce the notation
   $$\mathbb{S}_{(X,U)}(r):=\bigcup_{J\in\mathcal{J}}\mathbb{S}_{(X,U)}(J)\subseteq \mathbb{S}(r),$$
   where $\mathbb{S}_{(X,U)}(J):=\left\{U_{J} A U_{J}^\top: A\in \mathbb{S}^{r}\right\}$. To obtain the desired intersection rule, it suffices to show the symmetric variant of Assumption \ref{ass2} and the condition $$\mathrm{N}^F_{\mathbb{S}_{(X,U)}(r)}(X)\subseteq \mathrm{T}_{\mathcal{L}_{\Delta}}(X)~\text{when}~ s<r $$  automatically hold. Denote $\widehat{R}_X=\left[
      \begin{array}{cc}
      U^\top_{\Gamma}IU_{\Gamma} & 0 \\
       0 & 0 \\
     \end{array}
   \right]$. Obviously, the single nonzero matrix $\widehat{R}_X$ is linearly independent. Thus, the symmetric variant of Assumption \ref{ass2} holds at $X$. Meanwhile, the normal cone $\mathrm{N}^F_{\mathbb{S}_{(X,U)}(r)}(X)$ in $\mathbb{S}^n$
takes the form
\begin{equation}\label{TNL}
\mathrm{N}^F_{\mathbb{R}_X(r)}(X)=\left\{ \begin{array}{ll}
           \left\{
  W\in\mathbb{S}^{n}:U_\Gamma^\top W U_\Gamma=0 \right\}, & \text{if}~s=r\\
         \left\{ \begin{array}{l}
         U_{\Gamma_n^\bot}D {U}_{\Gamma_n^\bot}^\top
         \end{array}:\begin{array}{l}
                                                                               \text{diag}(D)=0\\
                                                                               D\in\mathbb{S}^{n-s} \\
                                                               \end{array}
         \right\}, &\text{if}~s=r-1,\\
         \left\{0\right\},&\text{if} ~s\leq r-2.\\
             \end{array}
\right.
\end{equation}
Note that $$\mathrm{T}_{\mathcal{L}_{\Delta}}(X)=\{H\in\mathbb{S}^{n}:\text{tr}(H)=0\}.$$
Together with fact $\text{tr}(  U_{\Gamma_n^\bot}D {U}_{\Gamma_n^\bot}^\top)=\text{tr}(D),$
we have $\mathrm{N}^F_{\mathbb{S}_{(X,U)}(r)}(X)\subseteq \mathrm{T}_{\mathcal{L}_{\Delta}}(X)$, if $s<r$.
Thus, from the symmetric version of Theorem \ref{NFeq}, we can get the desired assertion.
\end{proof}

It is apparent that the sparsity of a vector $x\in\mathbb{R}^n$ coincides with the  rank of the diagonal matrix $\text{Diag}(x)$. The intersection rule in Theorem \ref{NFeq} is also valid in the setting of the Euclidean space of real $n\times n$ diagonal matrices; denoted by $\text{Diag}(\mathbb{R}^n)$ (see, \cite{lewis1996group}), where the SVD is reduce to $\text{Diag}(x)=\text{Diag}(e)\text{Diag}(x)\text{Diag}(e)$ for all $x\in\mathbb{R}^n$. Here $e$ is the all-one vector in $\mathbb{R}^n$. The vector version of the intersection rule is exactly \cite[Corollary 2.10]{Pan2017Optimality}. We propose its matrix version in $\text{Diag}(\mathbb{R}^n)$ to illustrate that Theorem \ref{NFeq} can be regard as a matrix generalization of \cite[Corollary 2.10]{Pan2017Optimality}, where
$\mathrm{N}^F_{\mathbb{R}_X(r)}(X )\subseteq  \mathrm{T}_{\mathcal{L}}(X)~(\text{rank}(X)<s)$ in $\text{Diag}(\mathbb{R}^n)$  automatically holds.

\begin{corollary}(See \cite[Corollary 2.10]{Pan2017Optimality})
Consider the space $\text{Diag}(\mathbb{R}^{n})$. Let $\text{Diag}(x)\in \widehat{\mathcal{L}}\cap \widehat{\mathbb{R}}(r)$ with
$$
\widehat{\mathcal{L}}:=\left\{\text{Diag}(x)\in \text{Diag}(\mathbb{R}^{n}): \langle \text{Diag}(a^i), \text{Diag}(x)\rangle = b_i,  i=1,\ldots, l\right\},
$$
$$
\widehat{\mathbb{R}}(r):=\{\text{Diag}(x)\in \text{Diag}(\mathbb{R}^{n}): ~\text{rank(Diag}(x))\leq r\}.
$$
 If the matrices $\text{Diag}(a^i_\Gamma), i=1,\ldots,l$, are linearly independent, then
 \begin{equation*}
\mathrm{N}^F_{\widehat{\mathcal{L}}\cap \widehat{\mathbb{R}}(r)}(\text{Diag}(x))=\mathrm{N}_{\widehat{\mathcal{L}}}(\text{Diag}(x))+\mathrm{N}^F_{\widehat{\mathbb{R}}(r)}(\text{Diag}(x)).
\end{equation*}
\end{corollary}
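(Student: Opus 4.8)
The plan is to recognize this corollary as a direct specialization of Theorem \ref{NFeq} to the diagonal matrix space $\text{Diag}(\mathbb{R}^n)$, so that the entire argument reduces to translating the two assumptions and the extra condition into diagonal language. First I would fix the SVD of $\text{Diag}(x)$ as $\text{Diag}(e)\text{Diag}(x)\text{Diag}(e)$, which amounts to taking $U=V=I_n$ and $m=n$, with $\Gamma=\mathrm{supp}(x)$ the index set of nonzero entries and $s=\mathrm{rank}(\text{Diag}(x))=\|x\|_0=|\Gamma|$. Everything in the statement is then indexed by this $\Gamma$.

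Next I would translate the linear-independence hypothesis. With $A^i=\text{Diag}(a^i)$ and $U=V=I_n$, every off-diagonal block in the definition \eqref{Ti} vanishes, because $\text{Diag}(a^i)$ has no entries linking $\Gamma$ to its complement; hence both $T^i_X$ and $R^i_X$ collapse to the single block $\text{Diag}(a^i_\Gamma)$, i.e.\ $T^i_X=R^i_X$ in this setting. Consequently the assumed linear independence of $\text{Diag}(a^i_\Gamma)$, $i=1,\ldots,l$, is simultaneously the diagonal analogue of Assumption \ref{ass1} and of Assumption \ref{ass2}. This lets me invoke Theorem \ref{NFeq}(i) when $s=r$ and Theorem \ref{NFeq}(ii) when $s<r$.

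The one nontrivial point is the extra hypothesis $\mathrm{N}^F_{\widehat{\mathbb{R}}_X(r)}(\text{Diag}(x))\subseteq \mathrm{T}_{\widehat{\mathcal{L}}}(\text{Diag}(x))$ needed for the $s<r$ branch, where $\widehat{\mathbb{R}}_X(r)$ is the diagonal analogue of $\mathbb{R}_X(r)$. I would compute this cone directly as in the chain \eqref{J0}: each restricted subspace is $\widehat{\mathbb{R}}_X(J)=\{\text{Diag}(w):\mathrm{supp}(w)\subseteq J\}$, whose normal cone in $\text{Diag}(\mathbb{R}^n)$ is $\{\text{Diag}(w):\mathrm{supp}(w)\subseteq J^c\}$, so that $\mathrm{N}^F_{\widehat{\mathbb{R}}_X(r)}(\text{Diag}(x))=\bigcap_{J\in\mathcal{J}}\{\text{Diag}(w):\mathrm{supp}(w)\subseteq J^c\}=\{\text{Diag}(w):\mathrm{supp}(w)\subseteq(\bigcup_{J\in\mathcal{J}}J)^c\}$. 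Since $s<r$, every index can be adjoined to $\Gamma$ inside some size-$r$ set $J\supseteq\Gamma$, whence $\bigcup_{J\in\mathcal{J}}J=\{1,\ldots,n\}$ and the restricted normal cone is $\{0\}$. Equivalently, specializing \eqref{TNspace1} to $m=n$ and to diagonal matrices eliminates both the $U_{\Gamma_n^\bot}DV_{\Gamma_n^\bot}^\top$ term (its constraint $\text{diag}(D)=0$ forces $D=0$ once $D$ must itself be diagonal) and the $U_{[m-n]}HV^\top$ term (empty since $m=n$). Thus $\{0\}\subseteq \mathrm{T}_{\widehat{\mathcal{L}}}(\text{Diag}(x))$ holds trivially.

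With both assumptions verified and the extra condition automatic, I would conclude by applying Theorem \ref{NFeq}: for $s=r$ part (i) yields the claimed identity directly, and for $s<r$ part (ii) yields $\mathrm{N}^F_{\widehat{\mathcal{L}}\cap\widehat{\mathbb{R}}(r)}(\text{Diag}(x))=\mathrm{N}_{\widehat{\mathcal{L}}}(\text{Diag}(x))$, which coincides with $\mathrm{N}_{\widehat{\mathcal{L}}}(\text{Diag}(x))+\mathrm{N}^F_{\widehat{\mathbb{R}}(r)}(\text{Diag}(x))$ because $\mathrm{N}^F_{\widehat{\mathbb{R}}(r)}(\text{Diag}(x))=\{0\}$ when $s<r$ by the diagonal analogue of Lemma \ref{NBF}. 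The main obstacle is precisely the middle computation: confirming that the restricted normal cone genuinely collapses to $\{0\}$, since this is where the diagonal (sparse-vector) setting departs from the symmetric-matrix case, in which the $s=r-1$ off-diagonal block survives.
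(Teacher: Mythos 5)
Your proposal is correct and follows essentially the same route as the paper: reduce to the diagonal counterpart of Theorem \ref{NFeq}, observe that both Assumptions \ref{ass1} and \ref{ass2} collapse to the linear independence of the $\text{Diag}(a^i_\Gamma)$, and check that the extra condition for the $s<r$ branch holds because the relevant normal cone is trivial. Your explicit computation that $\mathrm{N}^F_{\widehat{\mathbb{R}}_X(r)}(\text{Diag}(x))=\{0\}$ is in fact slightly more careful than the paper, which justifies the condition by citing $\mathrm{N}^F_{\widehat{\mathbb{R}}(r)}(\text{Diag}(x))=\{0\}$ even though the hypothesis of Theorem \ref{NFeq}(ii) concerns the restricted cone $\mathrm{N}^F_{\mathbb{R}_X(r)}$ rather than $\mathrm{N}^F_{\mathbb{R}(r)}$.
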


\begin{proof} Assumptions  \ref{ass1} and \ref{ass2} are both reduced to the linear independence of $\text{Diag}(a^1_\Gamma),\ldots,\text{Diag}(a^l_\Gamma)$.
Using the counterpart of Theorem \ref{NFeq} in $\text{Diag}(\mathbb{R}^{n})$, it suffices to show that $\mathrm{N}^F_{\widehat{\mathbb{R}}(r)}(\text{Diag}(x))\subseteq \mathrm{T}_{\widehat{\mathcal{L}}}(\text{Diag}(x))$ when $s:=\text{rank}(\text{Diag}(x))<r$. This is trivial since $\mathrm{N}^F_{\widehat{\mathbb{R}}(r)}(\text{Diag}(x))=\{0\}$ whenever $s<r$. This completes the proof.
\end{proof}

\section{Optimality Conditions}\label{sec:Opti}
The optimality analysis, including the first-order and the second-order optimality conditions for the \ref{P}, is proposed in this section, which will provide necessary theoretical fundamentals for handling such a nonconvex discontinuous matrix programming problem. We begin by the introduction of two types of stationary points for the \ref{P}.

For any $X\in \mathbb{R}(r)$ and any $y\in \mathbb{R}^l$, define the Lagrangian function associated with the \ref{P} by
 \begin{equation}\label{LF}
L(X,y)=f(X)+\sum_{i=1}^ly_i[\langle A^i,X\rangle-b_i].
 \end{equation}
 We introduce the following two types of stationary points based on the Lagrangian function.
\begin{definition}\label{def-stat}
 Suppose $\alpha>0$ and $X \in\mathbb{R}(r)$.
\begin{itemize}
 \item[(i)] $X $ is called an $F$-stationary point of the \ref{P} if there exists a vector $y \in\mathbb{R}^l$ such that
\begin{equation}\label{F-sta}
\left\{ \begin{array}{lr}
            \mathcal{A}(X )=b, &  \\
 -\nabla_XL(X ,y )\in \mathrm{N}^{F}_{\mathbb{R}(r)}(X).&\\
\end{array}
\right.
\end{equation}
\item[(ii)] $X $ is called an $\alpha$-stationary point of the \ref{P} if there exists a vector $y \in\mathbb{R}^l$ such that
\begin{equation}
\left\{ \begin{array}{lr}
            \mathcal{A}(X )=b, &  \\
             X \in\Pi_{\mathbb{R}(r)}(X -\alpha\nabla_XL(X ,y )). &
             \end{array}
\right.
\end{equation}

\end{itemize}
\end{definition}

The relationship between these two types of stationary points for the \ref{P} is discussed in the following proposition.

\begin{Proposition}\label{F-M-alpha}
For any given $X \in\mathcal{L}\cap\mathbb{R}(r)$, $y\in\mathbb{R}^l$, and $\alpha>0$, denote
$$\beta=\left\{ \begin{array}{ll}
          \dfrac{{\sigma}_r(X)}{\|\nabla_{X}L(X ;y))\|_2}, & \text{if}~ \|\nabla_{X}L(X ;y))\|_2\neq 0,\\
            \infty, &\text{otherwise}.
             \end{array}
\right.$$
Then, for any $\alpha\in(0,\beta)$, $X$ is an $\alpha$-stationary point of the \ref{P} if and only if $X$ is an $F$-stationary point of the \ref{P}.
%
\end{Proposition}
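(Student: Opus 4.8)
The plan is to express both stationarity notions as conditions on $G:=\nabla_X L(X,y)$ read off from the SVD \eqref{SVD} of $X$, and then compare them; throughout write $s=\mathrm{rank}(X)$. Both conditions contain the feasibility requirement $\mathcal{A}(X)=b$, which is automatic from $X\in\mathcal{L}$, so it suffices to compare the inclusion $-G\in\mathrm{N}^F_{\mathbb{R}(r)}(X)$ with the membership $X\in\Pi_{\mathbb{R}(r)}(X-\alpha G)$. By Lemma \ref{NBF} and the explicit form of $\mathrm{N}_{\mathbb{R}_s}(X)$, $F$-stationarity is equivalent to $U_\Gamma^\top G=0$ and $GV_\Gamma=0$ when $s=r$, and to $G=0$ when $s<r$; in both cases $G$ can be written as $G=U_{\Gamma_m^\perp}\widehat D V_{\Gamma_n^\perp}^\top$ for some $\widehat D$, and since $U_{\Gamma_m^\perp},V_{\Gamma_n^\perp}$ have orthonormal columns, $\|G\|_2=\|\widehat D\|_2$.

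For ``$F$-stationary $\Rightarrow$ $\alpha$-stationary'' I would first clear the degenerate cases: if $G=0$ then $\beta=\infty$ and $Z:=X-\alpha G=X\in\mathbb{R}(r)$, so $X\in\Pi_{\mathbb{R}(r)}(X)$ trivially; and if $s<r$ then $F$-stationarity forces $G=0$, reducing to the previous case. The substantive case is $s=r$ with $G\neq0$. Substituting the representation of $G$ into $Z=X-\alpha G$ and computing $U^\top Z V$ along the splitting of rows and columns into $\Gamma$ and its complements, the two off-diagonal blocks vanish and $U^\top Z V$ is block diagonal with diagonal blocks $\Sigma_{\Gamma\Gamma}(X)$ and $-\alpha\widehat D$. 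Hence the singular values of $Z$ are the union $\{\sigma_1(X),\dots,\sigma_r(X)\}\cup\{\alpha\sigma_j(\widehat D)\}_j$. Because $\alpha<\beta=\sigma_r(X)/\|G\|_2$ gives $\alpha\sigma_j(\widehat D)\le\alpha\|G\|_2<\sigma_r(X)$ for every $j$, the top $r$ singular triples of $Z$ are exactly those of $X$, so the truncated-SVD (Eckart--Young) best rank-$r$ approximation of $Z$ equals $X$; that is, $X\in\Pi_{\mathbb{R}(r)}(Z)$.

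For the converse ``$\alpha$-stationary $\Rightarrow$ $F$-stationary'', which I expect to hold for every $\alpha>0$, I would argue straight from the variational inequality characterizing the projection together with $\mathrm{N}^F_{\mathbb{R}(r)}(X)=[\mathrm{T}^B_{\mathbb{R}(r)}(X)]^\circ$. Since $X$ minimizes $\|Y-Z\|_F$ over $Y\in\mathbb{R}(r)$, expanding $\|Y-Z\|_F^2\ge\|X-Z\|_F^2$ and using $Z-X=-\alpha G$ gives $\langle-\alpha G,\,Y-X\rangle\le\tfrac12\|Y-X\|_F^2$ for all $Y\in\mathbb{R}(r)$. For any $\Xi\in\mathrm{T}^B_{\mathbb{R}(r)}(X)$, choose $Y_k\to X$ in $\mathbb{R}(r)$ and $t_k\downarrow0$ with $t_k^{-1}(Y_k-X)\to\Xi$; dividing the inequality by $t_k$ and letting $k\to\infty$ yields $\langle-\alpha G,\Xi\rangle\le0$, hence $\langle-G,\Xi\rangle\le0$. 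As $\Xi$ was arbitrary, $-G\in[\mathrm{T}^B_{\mathbb{R}(r)}(X)]^\circ=\mathrm{N}^F_{\mathbb{R}(r)}(X)$, i.e. $X$ is $F$-stationary. Combining the two directions over $\alpha\in(0,\beta)$ gives the stated equivalence.

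The main obstacle is the forward implication in the case $s=r$, $G\neq0$: the crux is the block-diagonalization of $Z$ and the subsequent singular-value comparison, in which $\beta$ emerges precisely as the largest step size for which the smallest retained singular value $\sigma_r(X)$ still dominates the spectral size $\alpha\|G\|_2$ of the injected normal perturbation $-\alpha G$; beyond $\beta$ the truncation would discard a direction of $X$ in favour of a normal direction and $X$ would no longer be a best rank-$r$ approximant. A secondary bookkeeping point is the rank split: when $s<r$ and $G\neq0$ one has $\sigma_r(X)=0$, so $\beta=0$, the interval $(0,\beta)$ is empty, and there is nothing to verify---consistent with the fact that $\alpha$-stationarity then fails for every $\alpha>0$.
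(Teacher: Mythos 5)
Your proof is correct. The paper itself disposes of this proposition in two lines by citing the characterization of $\alpha$-stationarity from Theorem~2 of the reference for the single-rank-constraint case, namely that $X\in\Pi_{\mathbb{R}(r)}(X-\alpha G)$ with $G=\nabla_XL(X,y)$ holds iff $G=U_{\Gamma_m^\bot}DV_{\Gamma_n^\bot}^\top$ with $\|G\|_2\le\sigma_r(X)/\alpha$ when $s=r$, and iff $G=0$ when $s<r$; your argument reconstructs exactly that content, so the mathematical substance is the same, but you supply the details the paper outsources. Your forward direction is the standard Eckart--Young computation (block-diagonalize $X-\alpha G$ in the $(\Gamma,\Gamma^\perp)$ splitting and compare singular values, which is where $\beta$ enters), matching what the cited theorem does. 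Your converse is handled slightly differently and arguably more cleanly: rather than invoking the full explicit characterization, you use the general fact that membership in the projection onto a closed set forces the residual into the (proximal, hence Fr\'echet) normal cone, which works for every $\alpha>0$ and for any closed $\Omega$; this buys a shorter, structure-free argument for that half at the cost of not recovering the sharp threshold $\|G\|_2\le\sigma_r(X)/\alpha$ (which the proposition does not need, since $\alpha$ ranges over the open interval $(0,\beta)$). Your closing remarks on the degenerate cases ($G=0$, and $s<r$ with $G\neq0$ giving $\beta=0$ and an empty interval) correctly account for the remaining bookkeeping.
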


\begin{proof}
 By mimicking the proof of Theorem 2 in \cite{Li2018Social}, we can obtain that $X$ is an $\alpha$-stationary point of the \ref{P} if and only if
\begin{equation}\label{a}
\nabla_{X}L(X ;y)=\begin{cases}
U_{\Gamma_m^\bot}D V_{\Gamma_n^\bot}^\top,~\mathrm{with}~ \|\nabla_{X}L(X ;y))\|_2\leq\frac{1}{\alpha}{\sigma}_r(X),&\textrm{if~}s=r,\\
0,&\textrm{if~} s<r,\\
\end{cases}
\end{equation}
where $D\in \mathbb{R}^{(m-r)\times (n-r)}$ and $s=\text{rank}(X)$. Together with the expressions of $\mathrm{N}^{F}_{\mathbb{R}(r)}(X)$
as presented in Lemma \ref{NBF}, we can obtain all the desired assertions.
\end{proof}

The first-order optimality conditions in terms of the $F$-stationary point are stated as follows.

\begin{theorem}\label{F-min}
Let $X \in\mathcal{L}\cap\mathbb{R}(r)$ with $\mathrm{rank}(X )=s$.
\begin{itemize}
\item[(i)]  Suppose that  $X $ is a local minimizer of the \ref{P}. If $s=r$ and Assumption \ref{ass1} holds at $X$, or $s<r$ and Assumption \ref{ass2} holds at $X$ with $\mathrm{N}_{\mathbb{R}_X(r)}(X)\subseteq  \mathrm{T}_{\mathcal{L}}(X)$, then $X $ is an $F$-stationary point of the \ref{P}.
\item[(ii)] Suppose that $f$ is a convex function and $X$ is an $F$-stationary point of the \ref{P}. If $s=r$, then $X$ is a global minimizer of the \ref{P} restricted on $\mathbb{R}_X(\Gamma)$; If $s<r$, then $X$ is a global minimizer of the \ref{P}.
\end{itemize}
\end{theorem}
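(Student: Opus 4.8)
The plan is to establish part (i) by combining the classical first-order necessary condition (Fermat's rule) with the intersection rule of Theorem \ref{NFeq}, and to establish part (ii) directly from the subgradient inequality for convex functions.

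For part (i), since $X$ is a local minimizer of the smooth $f$ over the closed feasible set $\mathcal{L}\cap\mathbb{R}(r)$, Fermat's rule (see \cite{Rockafellar2013Variational}) gives $-\nabla f(X)\in\mathrm{N}^F_{\mathcal{L}\cap\mathbb{R}(r)}(X)$. Under the stated hypotheses Theorem \ref{NFeq} applies: when $s=r$ with Assumption \ref{ass1}, equation \eqref{Nequ1} yields $\mathrm{N}^F_{\mathcal{L}\cap\mathbb{R}(r)}(X)=\mathrm{N}_{\mathcal{L}}(X)+\mathrm{N}^F_{\mathbb{R}(r)}(X)$, while when $s<r$ with Assumption \ref{ass2} and the containment condition, equation \eqref{Nequ3} yields the same identity. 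In either case write $-\nabla f(X)=W_1+W_2$ with $W_1\in\mathrm{N}_{\mathcal{L}}(X)$ and $W_2\in\mathrm{N}^F_{\mathbb{R}(r)}(X)$. Because $\mathcal{L}$ is affine, its normal cone is the subspace $\mathrm{N}_{\mathcal{L}}(X)=\{\sum_{i=1}^l y_iA^i:y\in\mathbb{R}^l\}$, so $W_1=\sum_{i=1}^l y_iA^i$ for some $y\in\mathbb{R}^l$. Rearranging gives $-\nabla f(X)-\sum_{i=1}^l y_iA^i=W_2\in\mathrm{N}^F_{\mathbb{R}(r)}(X)$, that is, $-\nabla_XL(X,y)\in\mathrm{N}^F_{\mathbb{R}(r)}(X)$; together with $\mathcal{A}(X)=b$ (immediate from $X\in\mathcal{L}$) this is precisely $F$-stationarity in the sense of Definition \ref{def-stat}.

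For part (ii), $F$-stationarity supplies $y\in\mathbb{R}^l$ with $-\nabla f(X)=\sum_{i=1}^l y_iA^i+W_2$ and $W_2\in\mathrm{N}^F_{\mathbb{R}(r)}(X)$. For any feasible competitor $Z$, feasibility forces $\langle A^i,Z-X\rangle=0$, so the multiplier term drops out and $\langle\nabla f(X),Z-X\rangle=-\langle W_2,Z-X\rangle$. When $s<r$, Lemma \ref{NBF} gives $\mathrm{N}^F_{\mathbb{R}(r)}(X)=\{0\}$, hence $W_2=0$ and $\langle\nabla f(X),Z-X\rangle=0$ for every $Z\in\mathcal{L}\cap\mathbb{R}(r)$; convexity then gives $f(Z)\geq f(X)+\langle\nabla f(X),Z-X\rangle=f(X)$, so $X$ is a global minimizer. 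When $s=r$, restrict $Z$ to $\mathcal{L}\cap\mathbb{R}_X(\Gamma)$: since $\mathbb{R}_X(\Gamma)$ is a linear subspace we have $Z-X\in\mathbb{R}_X(\Gamma)$, whereas $W_2\in\mathrm{N}^F_{\mathbb{R}(r)}(X)=\mathrm{N}_{\mathbb{R}_s}(X)\subseteq\mathrm{N}_{\mathbb{R}_X(\Gamma)}(X)$ by Lemma \ref{NNN}(i) is orthogonal to that subspace, so $\langle W_2,Z-X\rangle=0$ and the convexity inequality again closes the argument.

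The decompositions are routine; the only delicate point is seeing why part (ii) must weaken to global minimality over $\mathcal{L}\cap\mathbb{R}_X(\Gamma)$ when $s=r$. In that regime $W_2$ generally does not vanish, and for a feasible $Z$ outside the subspace $\mathbb{R}_X(\Gamma)$ the chord $Z-X$ fails to be a tangent direction of $\mathbb{R}(r)$, so $\langle W_2,Z-X\rangle$ cannot be signed. The orthogonality that annihilates this inner product is available only on the linear slice $\mathbb{R}_X(\Gamma)$, which is exactly why the conclusion is confined there.
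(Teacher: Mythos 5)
Your proof is correct and follows essentially the same route as the paper's: part (i) is Fermat's rule combined with the intersection rule of Theorem \ref{NFeq} and the explicit description of $\mathrm{N}_{\mathcal{L}}(X)$ as the span of the $A^i$, and part (ii) rests on the same key facts, namely $\mathrm{N}^F_{\mathbb{R}(r)}(X)=\{0\}$ when $s<r$ and the orthogonality of $\mathrm{N}_{\mathbb{R}_s}(X)$ to the subspace $\mathbb{R}_X(\Gamma)$ when $s=r$. The only cosmetic difference is that you apply the gradient inequality to $f$ and cancel the multiplier terms using feasibility of the competitor, whereas the paper applies it directly to the convex Lagrangian $L(\cdot,y)$.
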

\begin{proof}
 (i) If $X$ is a local  minimizer of the \ref{P}, it follows from the generalized Fermat's theorem and Theorem \ref{NFeq} that
\begin{equation}\label{Fermat} -\nabla f(X )\in \mathrm{N}^F_{\mathbb{R}(r)\cap\mathcal{L}}(X) = \mathrm{N}_{\mathcal{L}}(X) + \mathrm{N}^F_{\mathbb{R}(r)}(X),\end{equation}
if (a) $s=r$ and Assumption \ref{ass1} holds at $X$, or (b) $s<r$, Assumption \ref{ass2} holds at $X$ and $\mathrm{N}_{\mathbb{R}_X(r)}(X)\subseteq  \mathrm{T}_{\mathcal{L}}(X)$. Together with the fact $$\mathrm{N}_{\mathcal{L}}(X) = \left\{\sum\limits_{i=1}^l y^i A^i: y^i\in \mathbb{R}, i=1,\ldots, l\right\},$$
 \eqref{Fermat} indicates that there exists $y\in \mathbb{R}^l$, such that $-\nabla_X L(X,y)\in \mathrm{N}^F_{\mathbb{R}(r)}(X)$. This yields the necessary optimality conditions for the \ref{P} as stated in (i).

(ii) If $s<r$, it follows from \eqref{F-sta} that there exists $y\in\mathbb{R}^l$ such that $\nabla_X L(X,y) =0$ and $L(X,y) = f(X)$. For any feasible solution $Y$ of the \ref{P}, it yields that
$$ f(Y) = L(Y, y) \geq L(X, y)+\langle \nabla_X L(X,y), Y-X\rangle = L(X,y) = f(X),$$
where the inequality follows from the convexity of the function $L(\cdot,y)$ due to the convexity of $f$. Thus we conclude that $X$ is a global solution of the \ref{P}. If $s = r$, then \eqref{F-sta} implies that there exist $y\in \mathbb{R}^l$ and $D\in \mathbb{R}^{(m-r)\times (n-r)}$ such that
\begin{equation}\label{La} \nabla_X L(X,y) = U_{\Gamma_m^\bot}DV^\top_{\Gamma_n^\bot}.
\end{equation} For any $Y\in \mathbb{R}_X(\Gamma)\cap \mathcal{L}$, we can find some matrix $A\in \mathbb{R}^{r \times r}$ such that $Y = U_\Gamma A V_\Gamma^\top$. Thus,
$$ f(Y) = L(Y,y) \geq L(X,y)+\langle \nabla_X L(X,y), Y-X\rangle = L(X,y) = f(X),$$
where the inequality follows from the convexity of $L(\cdot, y)$, and the second equality is from \eqref{La} and $Y-X = U_\Gamma (A-\Sigma(X)) V_\Gamma^\top$. This completes the proof.
\end{proof}

By virtue of the relationship between the $\alpha$-stationary point and the $F$-stationary point as established in Proposition \ref{F-M-alpha}, we have the following theorem by employing Theorem \ref{F-min}.

\begin{theorem}\label{alpha-min}
Let the point $X \in\mathcal{L}\cap\mathbb{R}(r)$with $rank(X)=s$.
\begin{itemize}
\item[(i)] Suppose that $X $ is a local minimizer of the \ref{P}. If $s=r$ and  Assumption \ref{ass1} holds at $X$, or $s<r$ and Assumption \ref{ass2} holds at $X$ with $\mathrm{N}_{\mathbb{R}_X(r)}(X)\subseteq  \mathrm{T}_{\mathcal{L}}(X)$, then there exists $y \in\mathbb{R}^l$ such that, for any $0<\alpha <\beta$, $X$ is an $\alpha$-stationary point of the \ref{P}.
\item[(ii)] Suppose that $f$ is convex and $X$ is an $\alpha$-stationary point of the \ref{P}. If $s=r$, then $X$ is a global minimizer of the \ref{P} restricted on $\mathbb{R}_X(r)$; If $s<r$, then $X$ is a global minimizer of the \ref{P}. 
\end{itemize}
\end{theorem}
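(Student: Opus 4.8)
The plan is to treat this statement as a direct consequence of the equivalence in Proposition \ref{F-M-alpha} together with the first-order analysis already carried out in Theorem \ref{F-min}; no new variational machinery is needed, so the work lies purely in transferring each conclusion through the $\alpha$-to-$F$ dictionary.

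For part (i), I would start from the hypothesis that $X$ is a local minimizer. Under the stated conditions (either $s=r$ with Assumption \ref{ass1}, or $s<r$ with Assumption \ref{ass2} and $\mathrm{N}_{\mathbb{R}_X(r)}(X)\subseteq \mathrm{T}_{\mathcal{L}}(X)$), Theorem \ref{F-min}(i) guarantees that $X$ is an $F$-stationary point, i.e. there is a multiplier $y\in\mathbb{R}^l$ with $\mathcal{A}(X)=b$ and $-\nabla_X L(X,y)\in \mathrm{N}^F_{\mathbb{R}(r)}(X)$. Fixing this particular $y$, I would form the scalar $\beta$ exactly as in Proposition \ref{F-M-alpha}. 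The forward direction of that proposition (from $F$-stationarity to $\alpha$-stationarity) then yields that, for every $\alpha\in(0,\beta)$, the same pair $(X,y)$ satisfies the $\alpha$-stationary conditions. This produces a single $y$ valid uniformly across $0<\alpha<\beta$, which is precisely the claim.

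For part (ii), I would run the equivalence in the opposite direction. Assuming $f$ convex and $X$ an $\alpha$-stationary point, Proposition \ref{F-M-alpha} (the $\alpha$-to-$F$ implication) upgrades $X$ to an $F$-stationary point of the \ref{P}. At that point Theorem \ref{F-min}(ii) applies verbatim: convexity forces $X$ to be a global minimizer of the \ref{P} when $s<r$, and a global minimizer restricted to $\mathbb{R}_X(r)$ when $s=r$; here one invokes $\mathbb{R}_X(r)=\mathbb{R}_X(\Gamma)$ when $\mathrm{rank}(X)=r$, so the restricted-domain conclusion matches the one recorded in Theorem \ref{F-min}(ii).

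I do not expect a genuine obstacle, since both halves reduce to already-proven facts. The only point requiring care is the bookkeeping around $\beta$: because $\beta$ depends on $X$ and on the multiplier $y$, one must produce $y$ first (from $F$-stationarity in (i)) and only afterwards quantify over $\alpha\in(0,\beta)$, rather than fixing $\alpha$ in advance. In part (ii) the analogous subtlety is that the norm bound built into $\alpha$-stationarity is strictly stronger than the membership $-\nabla_X L(X,y)\in\mathrm{N}^F_{\mathbb{R}(r)}(X)$, so the implication from $\alpha$-stationary to $F$-stationary holds essentially freely; I would simply note that discarding the bound $\|\nabla_X L(X,y)\|_2\le \alpha^{-1}\sigma_r(X)$ leaves exactly the $F$-stationary condition.
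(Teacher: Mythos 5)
Your proposal is correct and follows exactly the route the paper takes: the paper's proof is a one-line appeal to the equivalence in Proposition \ref{F-M-alpha} combined with Theorem \ref{F-min}, and your expanded bookkeeping (fixing $y$ before quantifying over $\alpha$ in (i), and noting that the $\alpha$-to-$F$ direction only discards the norm bound in (ii)) fills in precisely the details the paper leaves implicit.
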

\begin{proof}
By applying the equivalence as stated in Proposition \ref{F-M-alpha}, we can obtain the desired assertions readily from Theorem \ref{F-min}.
\end{proof}

To close this section, we study the second-order optimality conditions for the \ref{P}.

\begin{theorem}\label{necessary}
Suppose $f$ is twice continuously differentiable on $\mathbb{R}^{m\times n}$. If $X \in\mathcal{L}\cap\mathbb{R}(r)$ with the SVD as in \eqref{SVD} is a local minimizer of the \ref{P} and Assumption \ref{ass2}  holds at $X $, then
\begin{equation}\label{so-necessary}[\nabla^2f(X )](\Xi,\Xi)\geq 0, ~~~\forall\Xi \in \mathrm{T}_{\mathcal{L}}(X )\cap\mathrm{T}^B_{\mathbb{R}_X(r)}(X )\end{equation}
where $\nabla^2f(X )$ is the Hessian of $f$ at $X $.
\end{theorem}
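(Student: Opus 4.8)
The plan is to apply the standard second-order necessary condition for a local minimizer along feasible curves, specialized to the tangent cone we have already computed. First I would exploit the structure that under Assumption \ref{ass2}, Lemma \ref{TBeq} gives the clean identity $\mathrm{T}^B_{\mathcal{L}\cap\mathbb{R}_X(r)}(X)=\mathrm{T}_{\mathcal{L}}(X)\cap\mathrm{T}^B_{\mathbb{R}_X(r)}(X)$, so the set appearing in \eqref{so-necessary} is exactly the Bouligand tangent cone to the local model $\mathcal{L}\cap\mathbb{R}_X(r)$ at $X$. Since $\mathbb{R}_X(r)\subseteq\mathbb{R}(r)$ and $X$ is feasible for both, a local minimizer of the \ref{P} over $\mathcal{L}\cap\mathbb{R}(r)$ is in particular a local minimizer over the (smaller) feasible set $\mathcal{L}\cap\mathbb{R}_X(r)$. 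This reduction is the key conceptual move: it lets me work with $\mathbb{R}_X(r)$, which is a finite union of the subspaces $\mathbb{R}_X(J)$ (hence each piece is a smooth linear manifold), rather than with the singular variety $\mathbb{R}(r)$ directly.

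Next I would realize an arbitrary direction $\Xi\in\mathrm{T}_{\mathcal{L}}(X)\cap\mathrm{T}^B_{\mathbb{R}_X(r)}(X)$ by a genuine feasible curve. By \eqref{T}, such a $\Xi$ lies in $\mathrm{T}_{\mathcal{L}}(X)\cap\mathrm{T}_{\mathbb{R}_X(J)}(X)$ for some $J\in\mathcal{J}$, i.e.\ in the tangent space of the smooth manifold $\mathcal{L}\cap\mathbb{R}_X(J)$ at $X$. Because $\mathcal{L}$ is an affine set and $\mathbb{R}_X(J)$ is a subspace, their intersection is itself an affine set, so I can take the honest straight segment $X(t)=X+t\Xi$, which satisfies $X(t)\in\mathcal{L}\cap\mathbb{R}_X(J)\subseteq\mathcal{L}\cap\mathbb{R}(r)$ for all small $t$; thus $X(t)$ is feasible for the \ref{P} and $X(0)=X$, $\dot X(0)=\Xi$. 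Local minimality gives $f(X(t))\geq f(X)=f(X(0))$ for $t$ small.

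The final step is the second-order Taylor expansion. Since $X$ is a local minimizer, I would first invoke the first-order condition (via Theorem \ref{F-min} together with the expression $\mathrm{N}_{\mathcal{L}}(X)=\{\sum_i y^iA^i\}$) to obtain $y\in\mathbb{R}^l$ with $-\nabla_X L(X,y)\in\mathrm{N}^F_{\mathbb{R}(r)}(X)$, and then use that the Lagrangian and $f$ agree to second order along any curve staying in $\mathcal{L}$ (the linear constraint term is affine, so it contributes nothing to the Hessian). Expanding $t\mapsto f(X+t\Xi)$ to second order, the first-order term $\langle\nabla f(X),\Xi\rangle$ vanishes because $\Xi\in\mathrm{T}_{\mathcal{L}}(X)\cap\mathrm{T}^B_{\mathbb{R}_X(r)}(X)\subseteq\mathrm{T}^B_{\mathcal{L}\cap\mathbb{R}(r)}(X)$ annihilates the normal direction $-\nabla f(X)$, leaving
\[
f(X+t\Xi)=f(X)+\tfrac{t^2}{2}[\nabla^2 f(X)](\Xi,\Xi)+o(t^2)\geq f(X),
\]
and dividing by $t^2$ and letting $t\to 0^+$ yields $[\nabla^2 f(X)](\Xi,\Xi)\geq 0$. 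The main obstacle I anticipate is purely the bookkeeping in selecting the correct $J\in\mathcal{J}$ for a given $\Xi$ and confirming that the straight line stays rank-$\leq r$; this is where Lemma \ref{TBeq} and the union structure \eqref{T} do the real work, since without Assumption \ref{ass2} one could only assert the tangent-cone inclusion \eqref{04} and would not be able to guarantee an exact feasible curve realizing $\Xi$.
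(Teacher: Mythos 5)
Your proposal is correct, but it follows a genuinely different route from the paper. The paper keeps an arbitrary sequence $X^k\to X$ in $\mathcal{L}\cap\mathbb{R}_X(r)$ realizing $\Xi$ and must therefore manufacture a multiplier $y$ making the first-order term $\langle\nabla_X L(X,y),X^k-X\rangle$ vanish; this forces a two-case analysis ($s=r$ via the $F$-stationarity of Theorem \ref{F-min}(i), and $s<r$ via the inclusion \eqref{Nequ2} giving $-\nabla_X L(X,y)\in\mathrm{N}^F_{\mathbb{R}_X(r)}(X)\subseteq\mathbb{R}_X(J)^\perp$ for every $J$). You instead observe that, by \eqref{03} and distributivity, any $\Xi\in\mathrm{T}_{\mathcal{L}}(X)\cap\mathrm{T}^B_{\mathbb{R}_X(r)}(X)$ lies in the direction space of the \emph{affine} set $\mathcal{L}\cap\mathbb{R}_X(J)$ for some $J\in\mathcal{J}$, so the straight line $X+t\Xi$ is feasible for the \ref{P} for all small $|t|$ (two-sided); then $\phi(t)=f(X+t\Xi)$ has a local minimum at $t=0$, and one-dimensional calculus gives $\phi'(0)=0$ and $\phi''(0)=[\nabla^2f(X)](\Xi,\Xi)\geq 0$ with no case analysis. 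This is more elementary and, as a bonus, it does not actually use Assumption \ref{ass2}: the decomposition of the direction set into the pieces $\mathrm{T}_{\mathcal{L}}(X)\cap\mathrm{T}_{\mathbb{R}_X(J)}(X)$ is pure set algebra, whereas the paper needs Assumption \ref{ass2} only to identify that set with $\mathrm{T}^B_{\mathcal{L}\cap\mathbb{R}_X(r)}(X)$ via Lemma \ref{TBeq} (so your closing remark that the assumption is what guarantees a feasible curve is not accurate for your own construction).

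One step you should delete or repair: invoking Theorem \ref{F-min}(i) to obtain $-\nabla_X L(X,y)\in\mathrm{N}^F_{\mathbb{R}(r)}(X)$ is not legitimate when $s<r$, since that part of Theorem \ref{F-min} requires the extra hypothesis $\mathrm{N}_{\mathbb{R}_X(r)}(X)\subseteq\mathrm{T}_{\mathcal{L}}(X)$, which Theorem \ref{necessary} does not assume. Fortunately the step is redundant: once the two-sided feasible line is in hand, the vanishing of $\langle\nabla f(X),\Xi\rangle$ is automatic (or, if you prefer the normal-cone phrasing, it follows from $-\nabla f(X)\in[\mathrm{T}^B_{\mathcal{L}\cap\mathbb{R}(r)}(X)]^\circ$ applied to both $\Xi$ and $-\Xi$, which both lie in that tangent cone because $\mathrm{T}_{\mathcal{L}}(X)\cap\mathrm{T}_{\mathbb{R}_X(J)}(X)$ is a subspace).
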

\begin{proof} It is known from Lemma \ref{TBeq} that $\mathrm{T}_{\mathcal{L}}(X )\cap\mathrm{T}^B_{\mathbb{R}_X(r)}(X ) = \mathrm{T}^B_{{\mathcal{L}}(X )\cap\mathbb{R}_X(r)}(X )$ under Assumption \ref{ass2}. Thus, for any $\Xi \in \mathrm{T}_{\mathcal{L}}(X )\cap\mathrm{T}^B_{\mathbb{R}_X(r)}(X )$, there exist $\left\{X^k\right\}\subseteq {\mathcal{L}}\cap {\mathbb{R}}_X(r)$ and $t_k\downarrow 0$ such that $\lim\limits_{k\rightarrow \infty} \frac{X^k-X}{t_k} = \Xi$. Now we claim that there exists some $y\in\mathbb{R}^l$ such that
\begin{equation}\label{gra-vanish}
\langle \nabla_X L(X,y), X^k-X\rangle = 0, ~~\forall k.
\end{equation}
Case I: If $s = r$, then Theorem \ref{F-min} (i) implies that $X$ is an F-stationary point of the \ref{P} and hence there exists $y\in \mathbb{R}^l$ such that $\nabla_X L(X,y) \in \mathrm{N}^F_{\mathbb{R}(r)}(X) = \mathrm{N}_{\mathbb{R}_s}(X)$. Meanwhile, note that $X^k\in {\mathbb{R}}_X(r) = \mathbb{R}_{X}(\Gamma)$. Thus, \eqref{gra-vanish} follows readily by direct calculation.

\noindent Case II:  If $s<r$, the local optimality of $X$ indicates that \begin{equation}\label{subset}-\nabla f(X) \in \mathrm{N}^F_{\mathcal{L}\cap \mathbb{R}(r)}(X) \subseteq \mathrm{N}_{\mathcal{L}}(X) + \mathrm{N}^F_{\mathbb{R}_X(r)}(X) \end{equation}
where the inclusion is obtained from \eqref{Nequ2}. Thus, there exists $y\in \mathbb{R}^l$ such that
\begin{equation}\label{Lagrange} -\nabla_X L(X,y) \in \mathrm{N}^F_{\mathbb{R}_X(r)}(X) \subseteq \mathrm{N}_{\mathbb{R}_X(J)}(X)= \mathbb{R}_X(J)^\perp, \forall J\in \mathcal{J} \end{equation} where the inclusion is from the observation $\mathrm{N}^F_{\mathbb{R}_X(r)}(X) \subseteq \mathrm{N}^F_{\mathbb{R}_X(J)}(X), \forall J\in \mathcal{J}$. Note that for any given $k$, $X^k \in \mathbb{R}_X(r)$. We can find some $J_k\in \mathcal{J}$ such that $X^k-X \in \mathbb{R}_{X}(J_k)$. Combining with \eqref{Lagrange}, we can also get \eqref{gra-vanish}.

\noindent For any $k$, it then yields that
\begin{eqnarray}
f(X^k) &=& L(X^k, y) \nonumber\\
  &=& L(X, y)+ \frac{1}{2}\nabla_X^2 L(X, y)(X^k-X, X^k-X) + o(\|X^k -X\|^2_F)\nonumber \\
  &=& f(X) + \frac{1}{2}\nabla^2 f(X)(X^k-X, X^k-X) + o(\|X^k -X\|^2_F). \nonumber
\end{eqnarray}
Since $X$ is a local minimizer and $X^k\rightarrow X$, we have $$0\leq \lim\limits_{k\rightarrow \infty} \frac{f(X^k)-f(X)}{t_k^2} = \lim\limits_{k\rightarrow \infty}\frac{1}{2}\nabla^2 f(X)\left(\frac{X^k-X}{t_k}, \frac{X^k-X}{t_k}\right) = \frac{1}{2}\nabla^2 f(X) (\Xi, \Xi).$$
This completes the proof. \end{proof}

\begin{theorem}\label{so-sufficient}
Suppose $f$ is twice continuously differentiable on $\mathbb{R}^{m\times n}$. Let $X\in\mathcal{L}\cap\mathbb{R}(r)$ be an $F$-stationary point of the \ref{P} with its SVD as in \eqref{SVD}. Denotes $s=\mathrm{rank}(X)$. If
\begin{equation}\label{semi-def}[\nabla^2f(X )](\Xi,\Xi)>0,~~~\forall \Xi\in \left(\mathrm{T}_{\mathcal{L}}(X )\cap\mathrm{T}^B_{\mathbb{R}(r)}(X )\right)\setminus\{0\},\end{equation}
holds, we have the following statements.
\begin{itemize}
\item[(i)] If $s = r$, then $X $ is the strictly local optimal solution of the \ref{P} restricted on $\mathbb{R}_X(r)$;
\item[(ii)] If $s<r$, then $X $ is the strictly local optimal solution of the \ref{P}.
\end{itemize}
\end{theorem}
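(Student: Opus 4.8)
The plan is to argue by contradiction in both cases, reducing a failure of strict local optimality to a violation of the positive-definiteness hypothesis \eqref{semi-def} along a suitable limiting direction. Suppose $X$ is not a strict local minimizer of the relevant problem --- the \ref{P} restricted to $\mathcal{L}\cap\mathbb{R}_X(r)$ in case (i), and the full \ref{P} in case (ii). Then there is a sequence of feasible points $X^k\to X$ with $X^k\neq X$ and $f(X^k)\leq f(X)$, where $X^k\in\mathcal{L}\cap\mathbb{R}_X(r)$ for (i) and $X^k\in\mathcal{L}\cap\mathbb{R}(r)$ for (ii). Setting $t_k:=\|X^k-X\|_F\downarrow 0$ and $\Xi^k:=(X^k-X)/t_k$, the unit vectors $\Xi^k$ admit a subsequence converging to some $\Xi$ with $\|\Xi\|_F=1$, hence $\Xi\neq 0$; by the very definition of the Bouligand tangent cone, $\Xi$ lies in the tangent cone to the corresponding feasible set at $X$.

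First I would verify that $\Xi\in\mathrm{T}_{\mathcal{L}}(X)\cap\mathrm{T}^B_{\mathbb{R}(r)}(X)$, so that \eqref{semi-def} applies to it. In case (ii), $\Xi\in\mathrm{T}^B_{\mathcal{L}\cap\mathbb{R}(r)}(X)\subseteq\mathrm{T}_{\mathcal{L}}(X)\cap\mathrm{T}^B_{\mathbb{R}(r)}(X)$ directly from the tangent inclusion in \eqref{04} together with $\mathrm{T}^B_{\mathcal{L}}(X)=\mathrm{T}_{\mathcal{L}}(X)$ (as $\mathcal{L}$ is affine). In case (i), since $\mathbb{R}_X(\Gamma)\subseteq\mathbb{R}(r)$ and $\mathcal{L}\cap\mathbb{R}_X(\Gamma)\subseteq\mathcal{L}$, monotonicity of the Bouligand tangent cone under set inclusion gives $\Xi\in\mathrm{T}^B_{\mathcal{L}\cap\mathbb{R}_X(\Gamma)}(X)\subseteq\mathrm{T}_{\mathcal{L}}(X)\cap\mathrm{T}^B_{\mathbb{R}(r)}(X)$ as well, recalling $\mathbb{R}_X(r)=\mathbb{R}_X(\Gamma)$ when $s=r$.

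The crux is to annihilate the first-order term in a Taylor expansion of the Lagrangian, exactly as in \eqref{gra-vanish} of Theorem \ref{necessary}. Because $X$ is an $F$-stationary point, there is $y\in\mathbb{R}^l$ with $-\nabla_X L(X,y)\in\mathrm{N}^F_{\mathbb{R}(r)}(X)$. When $s<r$, Lemma \ref{NBF} gives $\mathrm{N}^F_{\mathbb{R}(r)}(X)=\{0\}$, so $\nabla_X L(X,y)=0$ and the first-order term vanishes outright. When $s=r$, Lemma \ref{NBF} yields $\nabla_X L(X,y)=U_{\Gamma_m^\bot}D V_{\Gamma_n^\bot}^\top$ for some $D$, while each $X^k-X\in\mathbb{R}_X(\Gamma)=\{U_\Gamma A V_\Gamma^\top\}$; the orthogonality relation $U_\Gamma^\top U_{\Gamma_m^\bot}=0$ then forces $\langle\nabla_X L(X,y),X^k-X\rangle=0$ for every $k$. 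Note that no constraint qualification (Assumption \ref{ass2}) is needed here, in contrast to the necessary condition.

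With the first-order term gone, I would expand, using that the constraints are affine so $\nabla^2_X L(X,y)=\nabla^2 f(X)$ and that $L(X^k,y)=f(X^k)$, $L(X,y)=f(X)$ by feasibility:
\[
f(X^k)-f(X)=\tfrac{1}{2}[\nabla^2 f(X)](X^k-X,X^k-X)+o(t_k^2).
\]
Dividing by $t_k^2$, invoking $f(X^k)\leq f(X)$, and passing to the limit along the chosen subsequence yields $[\nabla^2 f(X)](\Xi,\Xi)\leq 0$, contradicting \eqref{semi-def} since $\Xi\neq 0$ and $\Xi\in\mathrm{T}_{\mathcal{L}}(X)\cap\mathrm{T}^B_{\mathbb{R}(r)}(X)$. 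This contradiction establishes strict local optimality in each case. The step I expect to be the main obstacle is the first-order vanishing for $s=r$: one must argue on the restricted feasible set $\mathbb{R}_X(r)=\mathbb{R}_X(\Gamma)$ rather than the full $\mathbb{R}(r)$, because only then do the feasible increments $X^k-X$ remain orthogonal to $\nabla_X L(X,y)$; this is precisely the reason statement (i) confines its conclusion to $\mathbb{R}_X(r)$.
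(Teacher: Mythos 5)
Your proposal is correct and follows essentially the same route as the paper's proof: the same contradiction setup with a normalized sequence $\Xi^k=(X^k-X)/\|X^k-X\|_F$, the same use of $F$-stationarity together with Lemma \ref{NBF} to kill the first-order term (via $\mathrm{N}^F_{\mathbb{R}(r)}(X)=\mathrm{N}_{\mathbb{R}_s}(X)$ and $X^k-X\in\mathbb{R}_X(\Gamma)$ when $s=r$, and $\nabla_XL(X,y)=0$ when $s<r$), and the same second-order Taylor expansion of the Lagrangian contradicting \eqref{semi-def}. Your added remarks on why $\Xi$ lands in $\mathrm{T}_{\mathcal{L}}(X)\cap\mathrm{T}^B_{\mathbb{R}(r)}(X)$ and why no constraint qualification is needed are consistent with the paper's argument.
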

\begin{proof} (i) Consider the case $s = r$. We assume on the contrary that there exists a sequence $\left\{X^k\right\}\subseteq L\cap\mathbb{R}_X(r)$ such that $\lim\limits_{k\rightarrow \infty} X^k = X$, $X^k\neq X$, and $f(X^k)\leq f(X)$ for all $k=1, 2, \ldots$. Denote $\Xi^k: = \frac{X^k-X}{\|X^k-X\|_F}$. The boundedness of the sequence $\left\{\Xi^k\right\}$ admits a convergent subsequence. Without loss of generality, we assume that $\Xi^k\rightarrow \Xi$. Thus, $\Xi \in \mathrm{T}^B_{L\cap\mathbb{R}_X(r)}(X)$ and $\|\Xi\|_F=1$. Since $s = r$, then \begin{equation}\label{equalities}
\mathbb{R}_X(r) = \mathbb{R}_X(\Gamma)  \textrm{~and~}  \mathrm{N}^F_{\mathbb{R}(r)}(X ) = \mathrm{N}_{\mathbb{R}_s}(X).
\end{equation} The first equality in \eqref{equalities} indicates that $X^k-X \in \mathbb{R}_X(\Gamma)$. Since $X$ is an $F$-stationary point of the \ref{P}, combining with the second equality in \eqref{equalities}, we can find some $y\in \mathbb{R}^l$ such that $-\nabla_X L(X,y) \in \mathrm{N}_{\mathbb{R}_s}(X)$. It follows readily that
\begin{equation}\label{zero}
\langle \nabla_X L(X,y), X^k-X\rangle = 0, ~~\forall k.
\end{equation}
Direct calculations then yield
\begin{eqnarray*}
0\geq f(X^k)-f(X) &=& L(X^k, y)-L(X,y) \\
&=&\frac{1}{2}\nabla_X^2 L(X,y)(X^k-X, X^k-X)+o\left(\|X^k-X\|^2_F\right),
\end{eqnarray*}
where the first inequality is from the assumption of $f(X^k)\leq f(X)$, the first equality is from the feasibility of $X^k$ and $X$, and the second equality is from \eqref{zero}.
Thus, $$ 0\geq \lim\limits_{k\rightarrow \infty} \frac{f(X^k)-f(X)}{\|X^k-X\|^2_F} = \lim\limits_{k\rightarrow \infty} \frac{1}{2}\nabla_X^2 L(X,y)(\Xi^k, \Xi^k) = \frac{1}{2}\nabla_X^2 L(X,y)(\Xi, \Xi).$$
Note that $\nabla_X^2 L(X,y) = \nabla^2 f(X)$ and $\Xi\in \mathrm{T}^B_{L\cap\mathbb{R}_X(r)}(X) \subseteq \mathrm{T}^B_{L\cap\mathbb{R}(r)}(X) \subseteq \mathrm{T}_{L}(X)\cap \mathrm{T}^B_{\mathbb{R}(r)}(X)$. This arrives at a contradiction to \eqref{semi-def}. Thus, $X$ is a strictly local optimal solution of the \ref{P} restricted on $\mathbb{R}_X(r)$.

(ii) If $r<s$, the $F$-stationary point $X$ allows us to find some $y\in \mathbb{R}^l$ such that $\nabla_X L(X,y) =0$. Thus \eqref{zero} holds automatically. By mimicking the proof as above in (i), we can also obtain the desired strictly local optimality at $X$ for the \ref{P}. The proof is completed.
\end{proof}

\section{Applications}
Recently there has been a surge of interest in low-rank matrix optimization subject to some problem-specific constraints often characterized as an affine set. In this section, we illustrate how the results presented in Section 3 can be applied to the real world problems.


\noindent{\bf Hankel matrix approximation}\cite{grussler2018low-rank,qi2018a}.
Low-rank approximation has appeared in data analysis, system identification, control and so on(see, e.g.\cite{fazel2002matrix,Defeng2013hankel,Ankelhed}). For example, in the automatic control, the rank of a Hankel matrix determines the order of a linear dynamical system, and finding a low-rank Hankel matrix approximation can be formulated as
\begin{equation}\label{HMA}
  \begin{aligned}
\min\limits_{X\in\mathbb{R}^{m\times n}}&~~\dfrac{1}{2}\|H-X\|_F^2\\
s.t.&~~ X\in\mathcal{H},\\
 &~~ \mathrm{rank}(X)\leq r,
  \end{aligned}
\end{equation}
where
\begin{eqnarray*}
\mathcal{H}:&=&\{X\in\mathbb{R}^{m\times n}:X ~\text{is}~ \text{Hankel}\}\\
&=&\{X\in\mathbb{R}^{m\times n}:\langle E_{kj}-E_{k-1,j+1}, X\rangle=0,~\forall ~k=2,\ldots, m,~\forall~ j=1,\ldots, n-1\}.
\end{eqnarray*}
Here $E_{kj}$ denotes a matrix in which the $(k,j)$-th element is 1, and the remaining elements are $0$.
The Lagrangian function associated with the \eqref{HMA} takes the form of
$$L(X,y)=\dfrac{1}{2}\|H-X\|_F^2+\sum_{i=1}^{(m-1)(n-1)} y_i\langle A^i, X\rangle$$
where $A^i=E_{kj}-E_{k-1,j+1}\in\mathbb{R}^{m\times n}$ and $y_i\in\mathbb{R}$ is the Lagrangian multiplier corresponding to the equality constraint for $i=1,\ldots, (m-1)(n-1)$. It is easy to show the gradient of $L(X,y)$ with respect to $X$ is
$$\nabla_XL(X,y)=X-H+\sum_{i=1}^{(m-1)(n-1)} y_i A^i,$$
where
\begin{eqnarray*}
\sum_{i=1}^{(m-1)(n-1)} y_i A^i&=&\left[
 \begin{array}{cccccc}
    a_1& -b_1+a_2& -b_2+a_3 & \ldots & -b_{n-2}+a_{n-1} & -b_{n-1} \\
     \end{array}
  \right]\\
  &=&\left[
  \begin{array}{cc}
  I_m & -I_m\\
 \end{array}
 \right]
 \left[
  \begin{array}{ccccc}
  a_1 & a_2& \ldots & a_{n-1} &0\\
    0& b_1& \ldots & b_{n-2} &b_{n-1}\\
     \end{array}
 \right]
  \end{eqnarray*}
for
$a_j=\sum_{i=0}^{n-2}y_{i(n-1)+j}e_{i+2}\in\mathbb{R}^m$ and  $b_j=\sum_{i=0}^{n-2}y_{i(n-1)+j}e_{i+1}\in\mathbb{R}^m$.
We take $m=3$ and $n=3$ and $H=\left[
 \begin{array}{ccc}
     e_2& e_1& e_3 \\
    \end{array}
 \right]$
as an example.~Then \eqref{HMA} can be reformulated as
\begin{equation}\label{HMA3}
  \begin{aligned}
\min\limits_{X\in\mathbb{R}^{3\times 3}}&~~\|H-X\|_F^2\\
s.t.&~~\langle A^i, X\rangle=0, ~\forall i=1,\ldots,4,\\
 &~~ \mathrm{rank}(X)\leq 2,
  \end{aligned}
\end{equation}
where
$$A^1=\left[
 \begin{array}{ccc}
      e_2 & -e_1 & 0 \\
     \end{array}
  \right],~~~~~A^2=\left[
   \begin{array}{ccc}
      0 & e_2 & -e_1 \\
     \end{array}
  \right],$$
  $$A^3=\left[
     \begin{array}{ccc}
    e_3 & -e_2 & 0\\
     \end{array}
  \right],
 ~~~~~A^4=\left[
     \begin{array}{ccc}
      0 & e_3 & -e_2 \\
     \end{array}
  \right].$$
  Clearly, the objective function is convex, and
  $$-\nabla_X L(X,y)=H-X-\left[
 \begin{array}{ccc}
      a_1 & -b_1+a_2& -b_2 \\
     \end{array}
  \right].$$
  Consider the matrix $\overline{X}=\left[
 \begin{array}{ccc}
     e_2& e_1& 0 \\
     \end{array}
 \right]$ with $\Gamma=\{1, 2\}$, $\overline{U}=[e_1~ e_2~ e_3]$ and $\overline{V}=[e_2~ e_1~ e_3]$. Then, by \eqref{Ti}, we have
 $$T_{\overline{X}}^1=\left[
 \begin{array}{ccc}
     -e_1& e_2& 0 \\
     \end{array}
 \right],~~T_{\overline{X}}^2=\left[
 \begin{array}{ccc}
     e_2& 0& -e_1 \\
     \end{array}
 \right],$$
 $$T_{\overline{X}}^3=\left[
 \begin{array}{ccc}
     -e_2& e_3& 0 \\
     \end{array}
 \right],~~T_{\overline{X}}^4=\left[
 \begin{array}{ccc}
     e_3& 0& -e_2 \\
     \end{array}
 \right],$$
 which implies that Assumption \ref{ass1} holds. From Lemma \ref{NBF}, we obtain
$$\mathrm{N}^F_{\mathbb{R}(2)}(\overline{X})=\mathrm{N}_{\mathbb{R}_2}(\overline{X})=\{\beta e_3e_3^\top: \beta\in\mathbb{R}\}.$$
  Direct calculation gives
  \begin{eqnarray*}
  -\nabla_X L(\overline{X},y)&=&\left[
 \begin{array}{ccc}
     0 & 0 & e_3 \\
     \end{array}
  \right]\\
  &&-\left[
 \begin{array}{ccc}
      y_1e_2+y_3e_3 & -y_1e_1+y_2e_2-y_3e_2+y_4e_3 & -y_2e_1-y_4e_2 \\
     \end{array}
  \right]\\
  &=&\left[
      \begin{array}{ccc}
   0& y_1 & y_2  \\
   -y_1& y_3 -y_2 &y_4\\
    -y_3 & -y_4 & 1 \\
     \end{array}
   \right].
  \end{eqnarray*}
There exist $y_i=0$ for $i=1,2,3,4$ such that $-\nabla_X L(\overline{X},y)=e_3e_3^\top\in\mathrm{N}^F_{\mathbb{R}(2)}(\overline{X})$. Then, $\overline{X}$ is an $F$-stationary point of \eqref{HMA3}. Thus, from Theorem \ref{F-min} (ii), we obtain that the $\overline{X}$ is a global minimizer of \eqref{HMA3} restricted on $\mathbb{R}_{\overline{X}}(\Gamma)$.\\

\noindent{\bf Low-Rank representation over the manifold}\cite{Yin2015Nonlinear, YFu2105, Wang2015Kernelized}.
 ~Low-rank representation (LRR) has attracted great interest in computer vision, pattern recognition and signal processing (see, e.g., \cite{delvaux2007givens,wright2010sparse,xue2018robust}). However, in many computer vision applications, data often originate from a manifold, which is equipped with some Riemannian geometry.
To address this problem, an LRR over manifold is
proposed, and it can be formulated as
\begin{equation}\label{cLRR}
  \begin{aligned}
\min\limits_{W\in\mathbb{R}^{N\times N}}&~~\dfrac{1}{2}\sum_{i=1}^N w_iB^iw_i^\top\\
s.t.&~~ \sum_{j=1}^N W_{ij}=1, ~i=1,\ldots, N,\\
 &~~ \mathrm{rank}(W)\leq r,
  \end{aligned}
\end{equation}
where $r>1$ and $B^i\in\mathbb{R}^{N\times N}$, $w_i$ is the $i$-th row of matrix $W\in\mathbb{R}^{N\times N}$. The Lagrangian function of \eqref{cLRR} is given by
$$L(W,y)=\dfrac{1}{2}\sum_{i=1}^N w_iB^iw_i^\top+\sum_{i=1}^N y_i[\langle E^i, W\rangle -1],$$
where $y=(y_1,\ldots, y_N)$ is the Lagrangian multiplier corresponding to the equality constraint, and $E^i\in\mathbb{R}^{N\times N}$ denote the matrix with all components $1$ in the $i$-th row, and $0$ otherwise. It is easy to show the gradient of $L(W,y)$ with respect to $W$ is
$$\nabla_WL(W,y)=\left[
     \begin{array}{ccc}
      w_1B^1 \\
      \vdots\\
     w_NB^N \\
     \end{array}
  \right]+ye^T,$$
where $e\in\mathbb{R}^N$ denotes the vector with all components one. We take $B^i=I_N$ for $i=1,\ldots, N$ as an example. Clearly, the objective function is convex, and the gradient
$\nabla_WL(W,y)=W+y e^T.$
Consider the matrix $\overline{W}=\dfrac{1}{N}E$, where $E\in\mathbb{R}^{N\times N}$ denotes matrix with all components one. Direct calculation gives  $\nabla_WL(\overline{W},y)=\dfrac{1}{N}E-y e^T$.
 There exist $y_i=-\dfrac{1}{N}$ for $i=1,\ldots, N$ such that $\nabla_WL(\overline{W},y)=0$. Then, $\overline{W}$ with $\text{rank}(\overline{W})=1<r$ is an $F$-stationary point of \eqref{cLRR}. Thus, from Theorem \ref{F-min} (ii), we obtain that $\overline{W}$ is a global minimizer of \eqref{cLRR}. 

\section{Conclusions}
This paper is concerned with the low-rank matrix optimization problem whose feasible set is the intersection of the rank constraint set and an affine set. We have explored the intersection rule of Fr{\'e}chet normal cone to the feasible set of the \ref{P} relying on the linear independence assumption. With the help of this result, we have derived the first-order necessary, and the first-order sufficient optimality conditions for the \ref{P}. Moreover, the second-order necessary, and the second-order sufficient optimality conditions are also presented based on the Bouligand tangent cone. To illustrate the effectiveness of these optimality conditions, two specific applications of the \ref{P} are discussed. To the best of our knowledge, this paper is the first one to touch the optimality conditions for the original low-rank optimization problem \ref{P}. These proposed results not only enrich the optimality theory of matrix optimization, but also facilitate the algorithm design for the \ref{P}. In particular, the characterization of an $\alpha$-stationary point presents a much easier way to design numerical algorithms to search. There remain some issues worth pursuing further research: The optimality conditions to nonlinear equality and inequality constraints need to be derived and discussed, and effective algorithms for solving the \ref{P} need to be developed.
\section*{Acknowledgments}
The work was supported in part by the National Natural Science Foundation of China (Grants 11971052, 11771038) and Beijing Natural Science Foundation (Z190002).

\bibliographystyle{plain}
\bibliography{sdp}

\end{document}